\newtheorem{thm}{Theorem}
\newtheorem{proposition}{Proposition}
\newtheorem{lem}{Lemma}
\newtheorem{remark}{Remark}
\newtheorem{cor}{Corollary}
\theoremstyle{definition}
\newtheorem{definition}{Definition}
\theoremstyle{remark}
\newtheorem{rem}{Remark}
\newtheorem{example}{Example}
\def\embd{\hookrightarrow}
\newcommand{\ch}[2]
{\begin{bmatrix}
 #1 \\
 #2\\
\end{bmatrix}}
\newcommand{\chr}[4]
{\begin{bmatrix}
 #1 & #2\\
 #3 & #4
\end{bmatrix}}
\def\Q{\mathbb Q}
\def\bP{\mathbb P}
\def\bC{\mathbb C}
\def\bZ{\mathbb Z}
\def\C{\mathbb C}
\def\Z{\mathbb Z}
\def\Aut{\mbox{Aut}}
\def\X{\mathcal C}
\def\p{\mathfrak p}
\def\M{\mathcal M}
\def\L{\mathcal L}
\def\T{\th}
\def\D{\Delta}
\def\O{\Omega}
\def\Hs{\mathcal H_\s}
\def\A{\mathcal A}
\def\D{\Delta}
\def\R{\mathbb R}
\def\A{{\bold A}}
\def\P{{\mathbb P}}
\def\O{{\mathcal O}}
\def\Pic{\text{\rm Pic}}
\def\Jac{\text{\rm Jac }}
\def\Im{\text{\rm Im}}
\def\H_2{{\mathcal H_2}}  
\def\M{{\mathcal M}}
\def\A{\mathcal A}
\def\L{\mathcal L}
\def\v{\mathfrak v}
\def\u{\mathfrak u}
\def\a{\alpha}
\def\t{\tau}
\def\s{\sigma}
\def\e{\eta}
\def\G{\Gamma}
\def\l{\lambda}
\def\iso{{\, \cong\, }}
\def\lar{\longrightarrow}
\def\s{\sigma}
\def\p{\mathfrak p}
\def\u{\mathfrak u}
\def\({\left(}
\def\){\right)}
\def\cO{{\mathcal O}}
\def\a{\alpha}
\def\g{\gamma}
\def\e{\varepsilon}
\def\l{\lambda}
\def\P{\mathcal P}
\def\r{\delta}  
\def\<{\langle}
\def\>{\rangle}
\def\sem{\rtimes }
\def\a{{\alpha}}
\def\th{{\theta}}
\def\sem{{\rtimes}}
\def\iso{{\, \cong\, }}
\def\e{{\xi}}
\def\r{{r}}
\def\s{\sigma}
\def\P{\mathcal P}
\def\<{\langle}
\def\>{\rangle}
\def\_u{{\mathfrak u}}
\def\J{J_{48}}
\def\u{{u}}
\def\v{{v}}
\def\det{\mbox{det }}
\def\J{\mbox{Jac }}
\newcommand{\K}{\mathcal{K}}
\newcommand{\mC}{\mathcal{C}}
\def\C{\mathbb C}
\def\H{\bold H}
\def\R{\mathbb R}
\def\Q{\mathbb Q}
\def\Z{\mathbb Z}
\def\A{{\bold A}}
\def\P{{\mathbb P}}
\def\O{{\mathcal O}}
\def\Pic{\text{\rm Pic}}
\def\Im{\text{\rm Im}}
\newcommand{\mE}{\mathcal{E}}
\def\G2#1{{\Gamma^{(2)}#1}}
\def\Gs2#1{{\Gamma_0^{(2)}#1}}
\def\Ys2#1{{Y_0^{(2)}#1}}
\begin{document}

\begin{frontmatter}          
%
\title{The arithmetic of genus two curves\thanks{Notes on three lectures given in the conference on New Challenges in Digital Communications in Rijeka, Croatia, May 31 - June 10, 2010.}
}

\runningtitle{The arithmetic of genus two}

\author{\fnms{T.} \snm{Shaska}\thanks{Corresponding Author: Tanush Shaska,
Department of Mathematics and Statistics, Oakland  University, Rochester Hills, MI, 48306,  USA; E-mail: shaska@oakland.edu}
}

\runningauthor{Shaska, Beshaj}

\address{Department of Mathematics, Oakland University}

\author{\fnms{L.} \snm{Beshaj}\thanks{The author wants to thanks the  Department of Mathematics and Statistics at Oakland  University for their hospitality during the time which this paper was written}}

\address{Department of Mathematics, University of Vlora.}

\maketitle

\begin{abstract}
Genus 2 curves have been an object of much mathematical interest since eighteenth century and continued interest to date. They have become  an important tool in many algorithms in cryptographic applications, such as factoring large numbers, hyperelliptic curve cryptography, etc. Choosing  genus 2 curves suitable for such applications is an important step of such algorithms. In existing algorithms often such curves are chosen using equations of moduli spaces of curves with decomposable Jacobians or Humbert surfaces.

In these lectures we will cover basic properties of genus 2 curves, moduli spaces of (n,n)-decomposable Jacobians and Humbert surfaces, modular polynomials of genus 2, Kummer surfaces, theta-functions and the arithmetic on the Jacobians of genus 2, and their applications to cryptography.   The lectures are intended for  graduate students in algebra, cryptography, and related areas.
\end{abstract}

\begin{keyword} genus two curves, moduli spaces, hyperelliptic curve cryptography, modular polynomials
\end{keyword}

\end{frontmatter}


\section{Introduction}
Genus 2 curves are an important tool in many algorithms in cryptographic applications, such as factoring large numbers, hyperelliptic curve cryptography, etc. Choosing such genus 2 curves is an important step of such algorithms.

Most of genus 2 cryptographic applications are based on the Kummer surface $\K_{a,b,c,d}$. Choosing small $a, b, c, d$ makes the arithmetic on the Jacobian very fast.  However,  only a small fraction of all choices of $a, b, c, d$ are   secure. We aren't able to recognize secure choices, because we aren't able to count points on such a large genus-2 Jacobian.

One of the techniques in counting such points explores genus 2 curves with decomposable Jacobians. All curves of genus 2 with decomposable Jacobians of a fixed level lie on a Humbert surface. Humbert surfaces of level $n=3, 5, 7$ are the only explicitly computed surfaces and are computed by the first author in \cite{Sh7}, \cite{deg3}, \cite{deg5}.

In these lectures we will cover basic properties of genus 2 curves, moduli spaces of $(n,n)$-decomposable Jacobians,  Humbert surfaces of discriminant $n^2$, modular polynomials of level $N$ for genus 2, Kummer surfaces, theta-functions, and the arithmetic on the Jacobians of genus 2.

Our goal is not to discuss genus 2 cryptosystems. Instead, this paper develops and describes mathematical methods which are used in such systems. In the second section, we discuss briefly invariants of binary sextics, which determine a coordinate on the moduli space $\M_2$. Furthermore, we list the groups that occur as automorphism groups of genus 2 curves.

In section three, we study the description of the locus of genus two curves with fixed automorphism group $G$. Such loci are given in terms of invariants of binary sextics. The stratification of the moduli space $\M_2$ is given in detail. A genus two curve $C$ with automorphism group of order > 4 usually has an elliptic involution. An exception from this rule is only the curve with automorphism group the cyclic group $C_{10}$. All genus two curves with elliptic involutions have a pair $(E, E^\prime)$ of degree 2 elliptic subcovers. We determine the $j$-invariants of such elliptic curves in terms of $C$. The space of genus 2 curves with elliptic involutions is an irreducible 2-dimensional sublocus $\L_2$ of $\M_2$ which is computed explicitly in terms of absolute invariants $i_1, i_2, i_3$ of genus 2 curves.  A birational parametrization of $\L_2$ is discovered by the first author in \cite{SV1} in terms of dihedral invariants $u$ and $v$. Such invariants have later been used by many authors in genus 2 cryptosystems.

In section four, we give a brief discussion of Jacobians of genus two curves. Such Jacobians are described in terms of the pair of polynomials $[\u(x), \v(x)]$ a la Mumford.
In section five, we discuss the Kummer surface. In the first part of this section we define 16 theta functions and the 4 fundamental theta functions. A description of all the loci of genus two curves with fixed automorphism group $G$ is given in terms of the theta functions. In detail this is first described in \cite{ShW} and \cite{PSW}

In section six, we study the genus two curves with decomposable Jacobians. These are the curves with degree $n$ elliptic subcovers. Their Jacobian is isogenous to a pair of degree $n$ elliptic subcovers $(E, E^\prime)$. For $n$ odd the space of genus two curves with $(n, n)$-split Jacobians correspond to the Humbert space of discriminant $n^2$. We state the main result for the case $n=3$ and give a graphical representation of the space. In each case the $j$-invariants of $E$ and $E^\prime$ are determined.

In section seven is given a brief description of the filed of moduli versus the field of definition problem.   Such problem is fully understood for genus 2 and is implemented in a Maple package in section eleven. In section eight, we study modular polynomials of genus 2. Some of the basic definitions are given and an algorithm suggested for computing such polynomials.  More details on these topic will appear in \cite{DS}. In section nine we focus on factoring large numbers using genus two curves. Such algorithm is faster than the elliptic curve algorithm. It is based on the fact that when genus two curves with split Jacobians are used the computations on the Jacobian are carried to the pair $(E, E^\prime)$ by reducing in half. We suggest genus two curves such that the Jacobian split in different ways. For example the Jacobian splits $(2,2)$, $(3, 3)$ and $(5, 5)$. Such curves have faster time that genus two curves determined up to now.

In the last section we describe a Maple package which does computation with genus 2 curves. Such package computes several invariants of genus two curves including the automorphism group, the Igusa invariants, the splitting of the Jacobian, the Kummer surface, etc.
These lectures will be suitable to the graduate students in algebra, cryptography, and related areas who need genus two curves in their research.

\bigskip

\noindent \textbf{Notation:}  Throughout this paper a genus two curve means a genus two irreducible algebraic curve defined over an algebraically closed field $k$. Such curve will be denoted by $C$ and its function field by $K=k(C)$.  The field of complex, rational, and real  numbers  will be denoted by $\C, \Q$, and $\R$ respectively. The Jacobian of $C$ will be denoted by $\Jac C$ and the Kummer surface by $\K (C)$ or simply $J_C, \K_C$.

\bigskip

\noindent \textbf{Acknowledgements:} The second author wants to thank the Department of Mathematics and Statistics at Oakland University for their hospitality during the time that this paper was written.

\section{Preliminaries on genus two curves}

Throughout this paper, let $k$ be an algebraically closed field of  characteristic zero and $C$ a genus 2 curve defined over $k$. Then $C$ can be described as a double cover of $\bP^1(k)$ ramified in 6 places $w_1, \dots , w_6$. This sets up a
bijection between isomorphism classes of genus 2 curves and unordered distinct 6-tuples $w_1, \dots , w_6 \in \bP^1 (k)$ modulo automorphisms of $\bP^1 (k) $. An unordered 6-tuple $\{w_i\}_{i=1}^6$ can be described by a binary sextic (i.e. a homogenous equation $f(X,Z)$ of degree 6).

\subsection{Invariants of binary forms}
In this section we define the action of $ GL_2(k)$ on binary forms and discuss the basic notions of their invariants.  Let $k[X,Z]$  be the  polynomial ring in  two variables and  let $V_d$ denote  the $(d+1)$-dimensional  subspace  of  $k[X,Z]$  consisting of homogeneous polynomials.
\begin{equation}  \label{eq1}
f(X,Z) = a_0X^d + a_1X^{d-1}Z + ... + a_dZ^d
\end{equation}
of  degree $d$. Elements  in $V_d$  are called  {\it binary  forms} of degree $d$. We let $GL_2(k)$ act as a group of automorphisms on $k[X, Z]$   as follows:
\begin{equation}
 M =
\begin{pmatrix} a &b \\  c & d
\end{pmatrix}
\in GL_2(k),   \textit{   then       }
\quad  M  \begin{pmatrix} X\\ Z \end{pmatrix} =
\begin{pmatrix} aX+bZ\\ cX+dZ \end{pmatrix}.
\end{equation}
This action of $GL_2(k)$  leaves $V_d$ invariant and acts irreducibly on $V_d$. Let $A_0$, $A_1$,  ... , $A_d$ be coordinate  functions on $V_d$. Then the coordinate  ring of $V_d$ can be  identified with $ k[A_0  , ... , A_d] $. For $I \in k[A_0, ... , A_d]$ and $M \in GL_2(k)$, define $I^M \in k[A_0, ... ,A_d]$ as follows
\begin{equation} \label{eq_I}
{I^M}(f):= I(M(f))
\end{equation}
for all $f \in V_d$. Then  $I^{MN} = (I^{M})^{N}$ and Eq.~(\ref{eq_I}) defines an action of $GL_2(k)$ on $k[A_0, ... ,A_d]$.
A homogeneous polynomial $I\in k[A_0, \dots , A_d, X, Z]$ is called a {\it covariant}  of index $s$ if
$$I^M(f)=\delta^s I(f)$$
where $\delta =\det(M)$.  The homogeneous degree in $A_1, \dots , A_n$ is called the {\it degree} of $I$,  and the homogeneous degree in $X, Z$ is called the {\it  order} of $I$.  A covariant of order zero is
called {\it invariant}.  An invariant is a $SL_2(k)$-invariant on $V_d$.

We will use the symbolic method of classical theory to construct covariants of binary forms. Let
\begin{equation}
\begin{split}
f(X,Z):= & \sum_{i=0}^n
\begin{pmatrix} n \\ i \end{pmatrix}a_i X^{n-i} \, Z^i, \\
 g(X,Z) := & \sum_{i=0}^m   \begin{pmatrix} m \\ i \end{pmatrix} b_i
 X^{n-i} \, Z^i    \\
\end{split}
\end{equation}
be binary forms  of  degree $n$ and $m$ respectively in $k[X, Z]$. We define the {\bf r-transvection}
\begin{equation}
(f,g)^r:= c_k \cdot \sum_{k=0}^r (-1)^k
\begin{pmatrix} r \\ k
\end{pmatrix} \cdot
\frac {\partial^r f} {\partial X^{r-k} \, \,  \partial Y^k} \cdot
\frac {\partial^r g} {\partial X^k  \, \, \partial Y^{r-k}}
\end{equation}
%
where $c_k=\frac {(m-r)! \, (n-r)!} {n! \, m!}$.  It is a homogeneous  polynomial in $k[X, Z]$ and therefore a covariant of order $m+n-2r$ and degree 2. In general, the $r$-transvection of two covariants of order $m, n$ (resp., degree $p, q$) is a covariant of order $m+n-2r$  (resp., degree $p+q$).

For the rest of this paper $F(X,Z)$ denotes a binary form of order
$d:=2g+2$ as below
\begin{equation}
F(X,Z) =   \sum_{i=0}^d  a_i X^i Z^{d-i} = \sum_{i=0}^d
\begin{pmatrix} n \\ i
\end{pmatrix}    b_i X^i Z^{n-i}
\end{equation}
where $b_i=\frac {(n-i)! \, \, i!} {n!} \cdot a_i$,  for $i=0, \dots , d$.  We denote invariants (resp., covariants) of binary forms by $I_s$ (resp., $J_s$) where the subscript $s$ denotes the degree (resp., the
order).

\begin{remark}
It is an open problem to determine the field of invariants of binary form of degree $d \geq 7$.
\end{remark}

\subsection{Moduli space of curves}
Let  $\M_2$ denote the moduli space of genus 2 curves. To describe $\M_2$ we need to find polynomial
functions of the coefficients of a binary sextic $f(X,Z)$ invariant under linear substitutions in $X,Z$ of
determinant one. These invariants were worked out by Clebsch and Bolza in the case of zero characteristic
and generalized by Igusa for any characteristic different from 2; see \cite{Bo}, \cite{Ig}, or \cite{SV1}
for a more modern treatment.

Consider a binary sextic, i.e. a homogeneous polynomial $f(X,Z)$ in $k[X,Z]$ of degree 6:
$$f(X,Z)=a_6 X^6+ a_5 X^5Z+ \dots  +a_0 Z^6.$$
\emph{Igusa  $J$-invariants} $\, \, \{ J_{2i} \}$ of $f(X,Z)$ are homogeneous polynomials of degree $2i$ in
$k[a_0, \dots , a_6]$, for $i=1,2,3,5$; see \cite{Ig}, \cite{SV1} for their definitions.
Here $J_{10}$ is simply the discriminant of $f(X,Z)$. It vanishes if and only if the binary sextic has a multiple linear factor. These
$J_{2i}$    are invariant under the natural action of $SL_2(k)$ on sextics. Dividing such an invariant by another one of the same degree gives an invariant under $GL_2(k)$ action.

Two genus  2 curves) in the standard form $Y^2=f(X,1)$ are isomorphic if and only if the
corresponding sextics are $GL_2(k)$ conjugate. Thus if $I$ is a $GL_2(k)$ invariant (resp., homogeneous $SL_2(k)$ invariant), then the expression $I(C)$ (resp., the condition $I(C)=0$) is well defined. Thus the $GL_2(k)$ invariants are functions on the  moduli space $\mathcal M_2$ of genus 2 curves. This $\mathcal M_2$ is an {\textbf affine variety with coordinate ring}
$$k[\mathcal M_2]=k[a_0, \dots , a_6, J_{10}^{-1}]^{GL_2(k)}$$
which is  the subring of degree 0 elements in $k[J_2, \dots ,J_{10}, J_{10}^{-1}]$. The \emph{absolute
invariants}
$$ i_1:=144 \frac {J_4} {J_2^2}, \, \,  i_2:=- 1728 \frac {J_2J_4-3J_6} {J_2^3},\, \,  i_3 :=486 \frac {J_{10}} {J_2^5}, $$
are even $GL_2(k)$-invariants. Two genus 2 curves with $J_2\neq 0$ are isomorphic if and only if they have
the same absolute invariants. If  $J_2=0 $ then we can define new invariants as in \cite{Sh2}. For the rest
of this paper if we say ``there is a genus 2 curve $C$ defined over $k$'' we will mean the $k$-isomorphism
class of $C$.

The   reason that the above invariants were defined with the $J_2$ in the denominator was so that their degrees (as rational functions in terms of $a_0, \dots , a_6$) be as low as possible. Hence, the computations in this case are simpler. While most of the computational results on \cite{Sh7}, \cite{deg3}, \cite{deg5} are expressed in terms of $i_1, i_2, i_3$ we have started to convert all the results in terms of the new invariants  
\[ t_1 = \frac {J_2^5} {J_{10}}, \quad  t_2 = \frac {J_4^5} {J_{10}^2}, \quad t_3  = \frac {J_6^5} {J_{10}^3}.\]

\subsection{Automorphisms of curves of genus two}

Let $\X$ be a genus 2 curve defined over an algebraically closed field $k$. We denote its automorphism group by  $\Aut(\X)=Aut(K/k)$ or similarly $\Aut (\X)$. In any characteristic different  from 2, the automorphism group $Aut(\X)$ is isomorphic to one of the groups given by the following lemma.
%
\begin{lem}\label{thm1} The  automorphism group $G$ of a
genus 2 curve $\X$ in characteristic $\ne 2$ is isomorphic to \ $C_2$, $C_{10}$, $V_4$, $D_8$, $D_{12}$, $C_3 \sem D_8$,
$ GL_2(3)$, or $2^+S_5$. The case $G \iso 2^+S_5$ occurs only in characteristic 5. If $G \iso \Z_3 \sem D_8$ (resp., $
GL_2(3)$), then $\X$ has equation $Y^2=X^6-1$ (resp., $Y^2=X(X^4-1)$). If $G \iso C_{10}$, then $\X$ has equation
$Y^2=X^6-X$.
\end{lem}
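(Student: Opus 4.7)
The plan is to exploit the fact that every genus two curve is hyperelliptic, so the hyperelliptic involution $\iota$ is canonical and central in $\Aut(\X)$. Thus $\iota \in G$ and one obtains a short exact sequence
\[ 1 \lar \langle \iota \rangle \lar G \lar \bar G \lar 1, \]
where $\bar G := G/\langle\iota\rangle$ acts faithfully on the quotient $\X/\langle\iota\rangle \iso \bP^1$, permuting the set $W$ of $6$ Weierstrass (branch) points. First I would reduce the classification of $G$ to classifying the possible $\bar G$: a finite subgroup of $PGL_2(k)$ that stabilizes the unordered $6$-tuple $W$.

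Next I would invoke the classical Klein classification of finite subgroups of $PGL_2(k)$ in characteristic $\ne 2$: up to conjugacy these are $C_n$, $D_{2n}$, $A_4$, $S_4$, $A_5$ (plus the exceptional occurrences in small positive characteristic, in particular $PGL_2(\F_5)$ in characteristic $5$). For each candidate $\bar G$, I would compute its orbit sizes on $\bP^1$ via the orbit-stabilizer theorem and the known nontrivial stabilizers (poles, faces, edges, vertices of the associated polyhedron), and determine which orbit configurations can be chosen so that the union of ramified orbits has total size exactly $6$. This numerical sieve will rule out most cases and force $\bar G$ to lie in a short list: trivial, $C_2$, $C_3$, $C_5$, $D_4$, $D_6$, $S_4$, $A_5$ (the last only in characteristic $5$), matching the eight groups in the statement.

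Having pinned down $\bar G$, I then need to resolve the extension $1 \to \langle\iota\rangle \to G \to \bar G \to 1$ and produce an explicit model. For each admissible $\bar G$, I would normalize coordinates on $\bP^1$ so that $\bar G$ is in a standard form (e.g.\ $\bar G = C_n$ acting as $x \mapsto \zeta_n x$, $\bar G = D_{2n}$ adding $x \mapsto 1/x$, etc.), write down the generic sextic $f(X)$ fixed by $\bar G$, and read off a normal form for the curve $Y^2 = f(X)$. From this model the group structure of $G$ (central vs.\ non-central involution, splitting or nonsplitting of the extension) is visible: for instance $\bar G = C_5$ forces $f(X) = X^6 - X$ and gives $G \iso C_{10}$; $\bar G = S_4$ gives $f(X) = X(X^4-1)$ and the extension $GL_2(3)$; $\bar G = D_6$ gives $Y^2 = X^6-1$ and the extension $C_3 \rtimes D_8$; the case $A_5 \subset PGL_2(\F_5)$ produces $2^+S_5$ only in characteristic $5$ because the six-point orbit $\{0,1,2,3,4,\infty\} = \bP^1(\F_5)$ exists only there.

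The main obstacle is the last step: distinguishing the possible central extensions of $\bar G$ by $\Z/2$ and checking that each specific extension actually occurs (rather than merely being an abstract possibility). Concretely, when $\bar G$ contains an involution one must decide whether its preimage in $G$ has order $2$ or $4$, equivalently whether the involution of $\bP^1$ lifts to an automorphism of $\X$ or only to an order-$4$ element squaring to $\iota$; this is an obstruction computation that has to be done model-by-model using the explicit sextic. The characteristic $5$ exception also requires a separate verification that the $A_5$-invariant sextic degenerates to a product with multiple roots outside characteristic $5$, so that the extra automorphisms only materialize there.
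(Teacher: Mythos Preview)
The paper does not actually prove this lemma; it is quoted as a classical classification (going back to Bolza and Igusa, with the positive-characteristic refinements added later) and then used as input for the rest of the paper. So there is no ``paper's proof'' to compare against.

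Your strategy is precisely the standard one and is correct: quotient by the central hyperelliptic involution, classify the reduced group $\bar G \subset PGL_2(k)$ via Klein's list, sieve by the requirement that $\bar G$ stabilize an unordered $6$-set in $\bP^1$, and then resolve the central $C_2$-extension on the resulting normal forms. This is exactly how the result is established in the literature.

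That said, your enumeration of the admissible $\bar G$ has bookkeeping slips. The cyclic group $C_3$ never occurs as the \emph{full} reduced automorphism group: after scaling, a $C_3$-invariant branch locus is $\{X^6 + sX^3 + 1 = 0\}$, which always admits the extra involution $X \mapsto 1/X$, so $\bar G$ already contains $S_3$. For $Y^2 = X^6 - 1$ the reduced group is generated by $X \mapsto \zeta_6 X$ and $X \mapsto 1/X$, hence is dihedral of order $12$, not $6$; this is what makes $|G|=24$ and yields $C_3 \rtimes D_8$. And in characteristic $5$ the reduced group acting on $\bP^1(\F_5)$ is $PGL_2(\F_5) \cong S_5$, not $A_5$. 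With the corrected list
\[
\bar G \in \{\, 1,\; C_2,\; C_5,\; V_4,\; S_3,\; D_{12},\; S_4,\; PGL_2(\F_5)\,\}
\]
(the last only in characteristic $5$), your argument goes through as written.
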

For the rest of this paper, we  assume that $char(k)=0.$



\section{Automorphism groups and the description of the corresponding loci.}
In this section we will study genus two curves which have and extra involution in the automorphism group. It turns out that there is only one automorphism group from the above lemma which does not have this property, namely the cyclic group $C_{10}$. However, there is only one genus two curve (up to isomorphism) which has automorphism group $C_{10}$. Hence, such case is not very interesting to us.

Thus, we will study genus two curves which have an extra involution, which is equivalent with having a degree 2 elliptic subcover; see the section on decomposable Jacobians for degree $n>2$ elliptic subcovers.

\subsection{Genus 2 curves with  degree 2 elliptic subcovers}

An \textbf{elliptic involution} of $K$ is  an  involution in $G$ which is different from $z_0$ (the
hyperelliptic involution). Thus the elliptic involutions of $G$ are in 1-1 correspondence with the elliptic
subfields of $K$ of degree 2 (by the Riemann-Hurwitz formula).

If $z_1$ is an elliptic involution and $z_0$ the hyperelliptic one, then $z_2:=z_0\, z_1$ is another elliptic
involution. So the elliptic involutions come naturally in pairs. This pairs also the elliptic subfields of
$K$ of degree 2. Two such subfields $E_1$ and $E_2$ are paired if and only if $E_1\cap k(X)=E_2\cap k(X)$.
$E_1$ and $E_2$ are $G$-conjugate unless $G\iso D_6$ or $G\iso V_4$.

\begin{thm} \label{mainthm_kap3}
Let $K$ be a genus 2 field and $e_2(K)$ the number of $Aut(K)$-classes of elliptic subfields of $K$ of degree 2. Suppose $e_2(K) \geq 1$. Then the classical invariants of $K$ satisfy the equation,
\begin{scriptsize}
\begin{equation}
\begin{split}\label{eq_L2_J}
-J_2^7 J_4^4+8748J_{10}J_2^4J_6^2507384000J_{10}^2J_4^2J_2-19245600J_{10}^2J_4J_2^3-592272J_{10}J_4^4J_2^2 \\
-81J_2^3J_6^4-3499200J_{10}J_2J_6^3+4743360J_{10}J_4^3J_2J_6-870912J_{10}J_4^2J_2^3J_6\\
+1332J_2^4J_4^4J_6-125971200000J_{10}^3 +384J_4^6J_6+41472J_{10}J_4^5+159J_4^6J_2^3\\
-47952J_2J_4J_6^4+104976000J_{10}^2J_2^2J_6-1728J_4^5J_2^2J_6+6048J_4^4J_2J_6^2+108J_2^4J_4J_6^3\\
+12J_2^6J_4^3J_6+29376J_2^2J_4^2J_6^3-8910J_2^3J_4^3J_6^2-2099520000J_{10}^2J_4J_6-236196J_{10}^2J_2^5\\
+31104J_6^5-6912J_4^3J_6^34 +972J_{10}J_2^6J_4^2 +77436J_{10}J_4^3J_2^4-78J_2^5J_4^5\\
+3090960J_{10}J_4J_2^2J_6^2-5832J_{10}J_2^5J_4J_6-80J_4^7J_2-54J_2^5J_4^2J_6^2-9331200J_{10}J_4^2J_6^2 = 0&\\
\end{split}
\end{equation}
\end{scriptsize}

\noindent Further, $e_2(K)=2$ unless $K=k(X,Y)$ with $$ Y^2=X^5-X  $$ in which case $e_2(K)=1$.
\end{thm}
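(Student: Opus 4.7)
The plan is to reduce to a two-parameter normal form, compute the Igusa invariants on it, eliminate the two parameters to get (\ref{eq_L2_J}), and then read off the count $e_2(K)$ by going through each automorphism group in Lemma \ref{thm1}. First I would show that any $K$ with an elliptic involution admits the normal form
\[ C: Y^2 = X^6 + aX^4 + bX^2 + 1. \]
An elliptic involution $z_1 \ne z_0$ descends to a nontrivial involution $\bar z_1$ on $\bP^1 = K^{\<z_0\>}$; a fractional linear change of coordinate puts it in the shape $X \mapsto -X$. Riemann--Hurwitz forces $z_1$ to have exactly two fixed points on $C$, and a short argument shows that $0, \infty$ may be taken as non-branch points, so the six branch points of $C \to \bP^1$ pair up as $\{\pm\a_i\}_{i=1}^3$; a scaling $X \mapsto \mu X$ together with a rescaling of $Y$ then normalises $\prod\a_i^2 = 1$.

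Second I would compute the four Igusa invariants $J_{2i}(a,b)$, $i=1,2,3,5$, by the symbolic method of Section 2.1. The parametrisation $(a,b) \mapsto (J_2:J_4:J_6:J_{10})$ cuts out an irreducible two-dimensional sublocus $\L_2 \subset \M_2$, and eliminating $a,b$ by iterated resultants produces a weighted-homogeneous polynomial relation. After clearing extraneous factors arising from degenerations $\a_i^2 = \a_j^2$ (which already lie in $\L_2$), one isolates the irreducible defining polynomial of $\L_2$, which is precisely (\ref{eq_L2_J}). The weight of this symbolic calculation is one of the two main obstacles: its $24$ monomials and coefficients of magnitude up to $10^{11}$ make hand verification impossible, and one must check --- for instance via degree counting or a Gr\"obner basis argument --- that the polynomial obtained is genuinely irreducible (or that the right extraneous factors have been discarded), so that (\ref{eq_L2_J}) really defines $\L_2$ and not a proper multiple.

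Third, for the count: elliptic involutions come in pairs $\{z_1, z_2 = z_0 z_1\}$ inside a common $V_4 = \<z_0, z_1\> \subset \Aut(K)$, and the two associated degree-$2$ elliptic subfields $E_1, E_2$ lie in the same $\Aut(K)$-orbit if and only if some automorphism of $K$ swaps $z_1$ with $z_2$. Running through Lemma \ref{thm1} in characteristic zero, for $V_4$, $D_8$, $D_{12}$, and $C_3 \sem D_8$ (realised by $Y^2 = X^6 - 1$) a direct inspection of the action on $\{z_1, z_2\}$ shows no such swap exists, so each pair contributes two distinct classes and $e_2(K) = 2$. The only exception is $G \iso GL_2(3)$, realised uniquely by $Y^2 = X^5 - X$, where the extra symmetry provides an automorphism fusing the two classes, giving $e_2(K) = 1$. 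The second main obstacle is precisely this orbit analysis for $GL_2(3)$: one must exhibit a concrete automorphism of $Y^2 = X^5 - X$ interchanging $z_1$ and $z_2$ and, simultaneously, rule out any analogous fusion in each of the strictly smaller groups in Lemma \ref{thm1}, which requires a careful description of how each $G$ permutes the six Weierstrass points on $\bP^1$.
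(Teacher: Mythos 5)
Your first two steps follow the same route the paper takes (the proof is deferred to \cite{SV1}, but the machinery is exactly what Section~3 sets up): normalize an elliptic involution to $X\mapsto -X$, obtain the form $Y^2=X^6-s_1X^4+s_2X^2-1$ of Lemma~\ref{lem1}, pass to the $D_6$-invariants $u=s_1s_2$, $v=s_1^3+s_2^3$, and eliminate to get the defining equation of $\L_2$ in the $J_{2i}$; your remarks about the fixed points of $z_1$ not being Weierstrass points and about discarding extraneous factors are correct and are the right things to worry about.

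The genuine problem is in your third step, the computation of $e_2(K)$. Your bookkeeping model is ``one pair $\{z_1, z_0z_1\}$ per curve, fused or not,'' and your conjugacy claims are partly wrong. First, for the dihedral group of order $8$ the two involutions $z_1$ and $z_0z_1$ \emph{are} conjugate: writing $D_8=\langle r,s\rangle$ with $z_0=r^2$ central, the pair $\{s, r^2s\}$ is a single conjugacy class. The paper states this explicitly in the remark following Lemma~\ref{lem1} (``$z_1$ and $z_2=z_0z_1$ are conjugate when $G\iso D_4$,'' in the order-$2n$ naming) and again just before the theorem (``$E_1$ and $E_2$ are $G$-conjugate unless $G\iso D_6$ or $G\iso V_4$''). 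So your assertion that no swap exists for $D_8$ is false, and the conclusion $e_2=2$ survives there only because that group contains \emph{two} pairs of elliptic involutions, each internally fused into one class --- not because one pair splits into two classes. Second, for $GL_2(3)$ the correct statement is stronger than an intra-pair swap: all twelve non-central involutions (six pairs) lie in a single conjugacy class, and it is this cross-pair fusion that forces $e_2=1$; an automorphism merely exchanging $z_1$ with $z_0z_1$ inside each pair would leave you with six classes, not one. The repair is to count, for each group $G$ in Lemma~\ref{thm1}, the full set of elliptic involutions of $G$ and its decomposition into $G$-conjugacy classes (equivalently, the orbits of $\bAut=G/\langle z_0\rangle$ on the relevant involutions together with the lifting data), rather than tracking a single pair. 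With that correction the orbit count does come out to $2$ in every case except $Y^2=X^5-X$, as claimed.
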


\begin{lem} \label{lem1}
Suppose $z_1$ is an elliptic involution of $K$. Let $z_2=z_1z_0$, where $z_0$ is the hyperelliptic
involution. Let $E_i$ be the fixed field of $z_i$ for $i=1,2$. Then $K=k(X,Y)$ where
\begin{equation}
Y^2=X^6-s_1X^4+s_2X^2-1
\end{equation}
and $27-18s_1s_2-s_1^2s_2^2+4s_1^3+4s_2^3\neq 0$. Further $E_1$ and $E_2$ are the subfields $k(X^2,Y)$ and
$k(X^2, YX)$.
\end{lem}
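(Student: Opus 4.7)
The plan is to choose a model for $\X$ adapted to the elliptic involution $z_1$ and then read off both the normal form and the fixed subfields.

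First, the hyperelliptic involution $z_0$ is central in $\Aut(K)$, so $z_0$ and $z_1$ commute and generate a Klein four subgroup $V_4\subset\Aut(K)$. The quotient $\X/\langle z_0\rangle$ is $\bP^1$, and since $z_1\ne z_0$, the automorphism $z_1$ descends to a nontrivial involution $\bar z_1$ of $\bP^1$. A nontrivial involution of $\bP^1$ has exactly two distinct fixed points, so I would choose an affine coordinate $X$ sending them to $0$ and $\infty$; then $\bar z_1:X\mapsto -X$.

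Second, write $\X$ as $Y^2=f(X)$ with $\deg f=6$. Any lift of $\bar z_1$ to $\X$ has the form $(X,Y)\mapsto(-X,\epsilon Y)$ with $\epsilon=\pm 1$, which is a well-defined automorphism of $\X$ only when $f$ is even, say $f(X)=a_6X^6+a_4X^4+a_2X^2+a_0$. Smoothness of $\X$ forces $a_0\ne 0$ and $a_6\ne 0$: if $a_0=0$ (respectively $a_6=0$), then by the parity of $f$ the associated binary sextic has a multiple root at $[0:1]$ (respectively $[1:0]$), contradicting genus two. I would then rescale $X\mapsto\mu X$ and $Y\mapsto\lambda Y$ with $\mu^6=-a_0/a_6$ and $\lambda^2=-a_0$ to normalize the leading coefficient to $1$ and the constant term to $-1$, producing the required equation $Y^2=X^6-s_1X^4+s_2X^2-1$ for suitable $s_1,s_2\in k$.

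Third, the sextic on the right has six distinct roots iff the auxiliary cubic $g(T):=T^3-s_1T^2+s_2T-1$ (with $T=X^2$) has three distinct nonzero roots; since $g(0)=-1\ne 0$, non-vanishing at $0$ is automatic, and distinctness is governed by $\operatorname{disc}(g)$. A direct calculation gives $\operatorname{disc}(g)=-(27-18s_1s_2-s_1^2s_2^2+4s_1^3+4s_2^3)$, whose non-vanishing is precisely the stated condition.

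Finally, label the two lifts of $\bar z_1$ so that $z_1:(X,Y)\mapsto(-X,Y)$, which makes $z_2=z_1z_0:(X,Y)\mapsto(-X,-Y)$. Writing an element of $K=k(X)\oplus k(X)\cdot Y$ as $a(X)+b(X)Y$, a short computation shows that $z_1$-invariance is equivalent to $a$ and $b$ both being even in $X$, giving $E_1=k(X^2,Y)$; similarly, $z_2$-invariance forces $a$ even and $b$ odd, giving $E_2=k(X^2,XY)$. The step I expect to require the most care is verifying $a_0\ne 0$ and $a_6\ne 0$, i.e.\ that the two fixed points of $\bar z_1$ on $\bP^1$ are not branch points of $\X\to\bP^1$, since otherwise the rescaling to the advertised normal form would not be available.
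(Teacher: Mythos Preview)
Your proof is correct and follows the same approach indicated by the paper. The paper does not actually include a proof of this lemma (it is taken from \cite{SV1}), but the discussion immediately following the statement---normalizing $X$ so that $z_1(X)=-X$ and imposing the product condition $abc=1$ on the squares of the Weierstrass points---shows that the intended argument is exactly the one you give.

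One minor point worth tightening: when you ``label the two lifts of $\bar z_1$ so that $z_1:(X,Y)\mapsto(-X,Y)$'', remember that $z_1$ is given, not chosen. If $z_1$ happens to be the other lift $(X,Y)\mapsto(-X,-Y)$, the substitution $X\mapsto 1/X$, $Y\mapsto iY/X^3$ preserves the shape of the normal form (with $s_1,s_2$ interchanged) and swaps the two lifts, so one can always arrange $z_1(Y)=Y$. This is harmless since the lemma only asserts the existence of such coordinates, but it is the step hiding behind your labeling.
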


We need to determine to what extent the normalization  above   determines the coordinate $X$. The
condition $z_1(X)=-X$ determines the coordinate $X$ up to a coordinate change by some $\g\in \Gamma$
centralizing $z_1$. Such $\g$ satisfies $\g(X)=mX$ or $\g(X) = \frac m X$, $m\in k\setminus \{0\}$. The
additional condition $abc=1$ forces $1=-\g(\a_1) \dots \g(a_6)$, hence $m^6=1$. So $X$ is determined up to a
coordinate change by the subgroup $H\iso D_6$ of $\Gamma$ generated by $\tau_1: X\to \e_6X$, $\tau_2: X\to
\frac 1 X$, where $\e_6$ is a primitive 6-th root of unity. Let $\e_3:=\e_6^2$. The coordinate change by
$\tau_1$ replaces $s_1$ by $\e_3s_2$ and $s_2$ by $\e_3^2 s_2$. The coordinate change by $\tau_2$ switches
$s_1$ and $s_2$. Invariants of this $H$-action are:
\begin{equation} u:=s_1 s_2, \quad v:=s_1^3+s_2^3 \end{equation}

\begin{remark}\label{remark1}
Such invariants were quite important in simplifying  computations for the locus $\L_2$. Later they have been used  by Duursma and Kiyavash to  show that genus 2 curves with extra involutions are suitable for the vector decomposition problem; see \cite{Du} for details. In this volume they are used again, see the paper by Cardona and  Quer. They were later generalized to higher genus hyperelliptic curves and were called \textbf{dihedral invariants}; see  \cite{GSh}.
\end{remark}

\begin{figure}[htbp]
\begin{center}
\includegraphics[width=8cm]{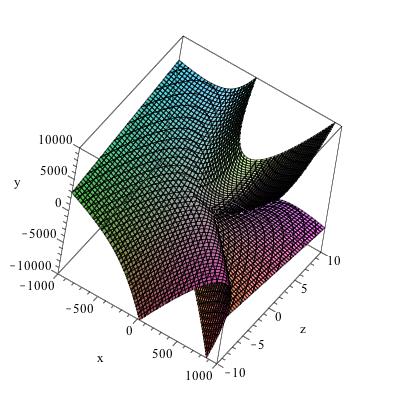}
\caption{The space $\L_2$ of genus 2 curves with extra involutions.}
\label{default}
\end{center}
\end{figure}

The following proposition determines the group $G$ in terms of $u$ and $v$.

\begin{proposition}\label{u_v}
Let $C$ be a genus 2 curve such that $G:=Aut(C)$ has an elliptic involution and $J_2\neq 0$. Then,

a) $G\iso \bZ_3 \sem D_4$ if and only if $(u,v)=(0,0)$ or $(u,v)=(225, 6750)$.

b) $G\iso W_1$ if and only if $u=25$ and $v=-250$.

c) $G\iso D_6$ if and only if $4v-u^2+110u-1125=0$, for $u\neq 9, 70 + 30\sqrt{5}, 25$.

Moreover, the classical invariants satisfy the equations,
\begin{scriptsize}
\begin{equation}
\begin{split}
-J_4J_2^4+12J_2^3J_6-52J_4^2J_2^2+80J_4^3+960J_2J_4J_6-3600J_6^2 &=0\\
864J_{10}J_2^5+3456000J_{10}J_4^2J_2-43200J_{10}J_4J_2^3-2332800000J_{10}^2-J_4^2J_2^6\\
-768J_4^4J_2^2+48J_4^3J_2^4+4096J_4^5 &=0\\
\end{split}
\end{equation}
\end{scriptsize}

d) $G\iso D_4$ if and only if $v^2-4u^3=0$, for $u \neq 1,9, 0, 25, 225$. Cases $u=0,225$ and $u=25$ are
reduced to cases a),and b) respectively. Moreover, the classical invariants satisfy \eqref{eq_L2_J} and the
following equation,
\begin{small}
\begin{equation}
\begin{split}
1706J_4^2J_2^2+2560J_4^3+27J_4J_2^4-81J_2^3J_6-14880J_2J_4J_ 6+28800J_6^2 &=0
\end{split}
\end{equation}
\end{small}
\end{proposition}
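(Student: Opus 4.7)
The plan is case-by-case. Starting from the Lemma~1 normal form
\[ Y^2 = X^6 - s_1 X^4 + s_2 X^2 - 1, \]
we already have $V_4 = \langle z_0, z_1\rangle \subseteq G$ with $z_0: Y \mapsto -Y$ and $z_1: X\mapsto -X$. The six branch points are $\pm\sqrt{w_i}$, where $w_1, w_2, w_3$ are the roots of $W^3 - s_1 W^2 + s_2 W - 1$. Any strictly larger $G$ corresponds to a finite subgroup $\bar G := G/\langle z_0\rangle$ of $PGL_2$ containing $\langle z_1\rangle$ and fixing the branch set setwise. I would enumerate the admissible $\bar G$ (Klein four, $S_3$, dihedral of order $12$, or $S_4$) and in each case derive a polynomial constraint on $(s_1, s_2)$, then re-express it in terms of the $H$-invariants $u = s_1 s_2$, $v = s_1^3 + s_2^3$.

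For part (d), any extra involution of $\bar G$ commuting with $z_1$ must be of the form $X\mapsto c/X$ for some nonzero $c$. Substituting into the sextic and matching coefficients forces $c^6 = 1$ together with $s_1 = c^2 s_2$, so $s_1^3 = s_2^3$; combined with the algebraic identity $v^2 - 4u^3 = (s_1^3 - s_2^3)^2$, this gives precisely $v^2 - 4u^3 = 0$. Conversely, the condition $v^2 = 4u^3$ allows one to pick an $H$-representative with $s_1 = s_2$, for which $X\mapsto 1/X$ is the sought extra involution. For part (c), curves with $G$ of order $12$ admit the classical form $Y^2 = X^6 + aX^3 + 1$ with extra symmetries $X\mapsto \varepsilon_3 X$ and $X\mapsto 1/X$; conjugating $X\mapsto 1/X$ to $U\mapsto -U$ via $X = (U+1)/(U-1)$ places the curve in the Lemma~1 form, and a direct computation yields
\[
u(a) = \frac{9(a^2-100)}{a^2-4}, \qquad v(a) = \frac{54(a^4 + 360 a^2 + 2000)}{(a^2-4)^2}.
\]
Eliminating $a$ produces exactly $4v - u^2 + 110 u - 1125 = 0$.

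Parts (a) and (b) concern isolated curves. By Lemma~1, $G \cong \bZ_3 \sem D_4$ is realized only by $Y^2 = X^6 - 1$, giving $(s_1, s_2) = (0, 0)$ and hence $(u, v) = (0, 0)$; the second point $(225, 6750)$ reflects that this very symmetric curve has several inequivalent Klein-four subgroups containing $z_0$, the alternative normalization being $(s_1, s_2) = (15, 15)$, as one verifies from the factorization
\[
X^6 - 15 X^4 + 15 X^2 - 1 = (X^2-1)(X^4 - 14 X^2 + 1),
\]
whose six roots $\pm 1, \pm(2\pm\sqrt{3})$ are projectively equivalent to the sixth roots of unity. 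Similarly, $G \cong W_1$ is realized by $Y^2 = X(X^4 - 1)$, which after the coordinate change aligning one of its reflections with $X\mapsto -X$ yields $(u, v) = (25, -250)$. The classical-invariant equations in each part then follow by computing $J_2, J_4, J_6, J_{10}$ as polynomials in $s_1, s_2$, imposing the respective symmetry constraint together with Eq.~\eqref{eq_L2_J}, and eliminating $s_1, s_2$.

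The principal obstacle is the converse direction in each part: one must verify that each polynomial condition on $(u, v)$ (equivalently on the $J_{2i}$) characterizes $G$ exactly, and not a proper sub- or supergroup. This forces a careful enumeration of the special $(u, v)$-values at which the automorphism group jumps; the exceptions $u = 9,\ 25,\ 225,\ 70 + 30\sqrt{5}$ listed in the statement catalog these jump points, each being an intersection of two of the loci in (a)--(d). A good sanity check is that the three isolated $(u,v)$-points from (a) and (b), namely $(0,0)$, $(225, 6750)$ and $(25, -250)$, all satisfy $v^2 - 4u^3 = 0$, consistent with $D_4$ being a subgroup of both $\bZ_3 \sem D_4$ and $W_1$.
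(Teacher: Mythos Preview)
The paper does not actually supply a proof of this proposition; it is quoted from \cite{SV1}, and the surrounding text only adds a remark and a figure. So there is nothing to compare your argument against in this document.

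That said, your approach is the natural one and is, in outline, exactly what the original argument in \cite{SV1} does: pass to the reduced automorphism group $\bar G = G/\langle z_0\rangle \subset PGL_2(k)$, classify the finite overgroups of $\langle z_1\rangle$ that can stabilise the branch set, and translate the resulting conditions on $(s_1,s_2)$ into conditions on the $H$-invariants $(u,v)$. Your computations for (c) and (d) are correct (I checked the elimination for (c) and the identity $v^2-4u^3=(s_1^3-s_2^3)^2$ for (d)), and the identification of the isolated points in (a) and (b) is right, including your observation that $(0,0)$, $(225,6750)$, $(25,-250)$ all lie on $v^2-4u^3=0$.

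Two small points where your sketch should be tightened. First, in part (d) you should say why the extra element of $\bar G$ is an \emph{involution} rather than, say, an element of order $4$ commuting with $z_1$; this follows from Lemma~\ref{thm1}, since $|G|=8$ forces $G\cong D_8$, whose quotient by its centre $\langle z_0\rangle$ is $V_4$. Second, your final paragraph lumps all the excluded $u$-values together as ``jump points,'' but they are not all of the same nature: for instance $u=9$ on the $D_6$-locus gives $v=54$, and then $27-18u-u^2+4v=0$, so the sextic acquires a repeated root and the curve degenerates. Likewise $u=1$ (which you omitted from your list) is one of the $D_4$ exclusions in the statement. Distinguishing degeneration from genuine enlargement of $G$ is exactly the bookkeeping your last paragraph flags as the principal obstacle, and it does need to be carried out case by case.
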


\begin{rem}
The following graphs are generated by Maple 13. Notice the singular point in both spaces of curves with automorphism group $D_4$ and $D_6$. Such points correspond to larger automorphism groups, namely the groups of order 24 and 48 respectively.   This can be easily seen from the group theory since $D_4 \embd \bZ_3 \sem D_4$ and  $D_6 \embd W_1$.
\end{rem}

\begin{figure}[htbp]
\begin{center}
\includegraphics[width=5cm]{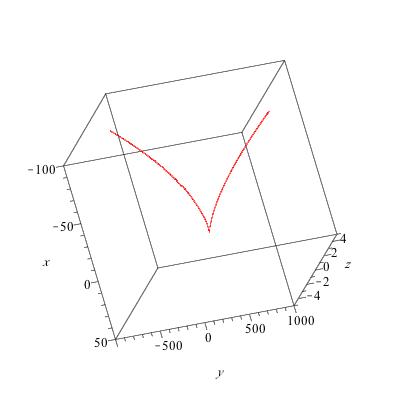} \quad \includegraphics[width=5cm]{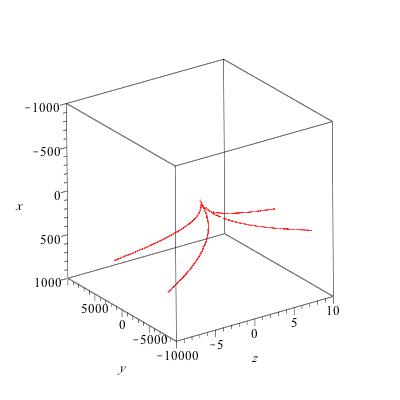}
\caption{The space of genus 2 curves with automorphism group $D_4$ and $D_6$ respectively.}
\label{default}
\end{center}
\end{figure}

\begin{proposition}\label{thm_3}  The mapping
$$A: (u,v) \lar (i_1, i_2, i_3)$$
gives a birational parametrization of $\L_2$. The fibers of $A$ of cardinality $>1$ correspond to those curves $C$
with $|\ Aut(C)| > 4$.
\end{proposition}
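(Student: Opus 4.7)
The plan is to combine Lemma \ref{lem1} with an analysis of the residual coordinate-change ambiguity captured by the group $H \iso D_6$ described after that lemma. By Lemma \ref{lem1}, every $C \in \L_2$ is isomorphic to a curve $Y^2 = X^6 - s_1 X^4 + s_2 X^2 - 1$, so setting $u = s_1 s_2$ and $v = s_1^3 + s_2^3$ yields a preimage under $A$ of the triple $(i_1(C), i_2(C), i_3(C))$; this shows that $A$ is surjective onto the irreducible 2-dimensional variety $\L_2$. In the opposite direction, a generic $(u,v)$ determines the unordered pair $\{s_1^3, s_2^3\}$ as the roots of $t^2 - vt + u^3 = 0$, and the six choices of cube roots compatible with $s_1 s_2 = u$ form a single orbit under the $H$-action $(s_1, s_2) \mapsto (\epsilon_3 s_1, \epsilon_3^{-1} s_2)$, $(s_1, s_2) \mapsto (s_2, s_1)$. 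Since this $H$-action is precisely the coordinate ambiguity preserving the normal form, $(u,v)$ determines the normal-form sextic up to $\P^1$-isomorphism, hence a well-defined point of $\M_2$.

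Next I would argue generic injectivity. A generic $C \in \L_2$ has $\Aut(C) \iso V_4$, containing the hyperelliptic involution $z_0$ and a single pair $\{z_1, z_2 = z_0 z_1\}$ of elliptic involutions; both members of this pair induce the same involution on the $\P^1$ of the hyperelliptic cover, so the normalization of Lemma \ref{lem1} is unambiguous modulo the $H$-action on the coordinate $X$, and $(u,v)$ is therefore uniquely determined by $C$. Consequently, on the Zariski-open subset of $\L_2$ where $|\Aut(C)| = 4$ the map $A$ has singleton fibers, and since $A$ is dominant with source and target both of dimension 2, it is birational.

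For the fiber statement, when $|\Aut(C)| > 4$ the curve has strictly more than one pair of elliptic involutions, as one verifies from the subgroup structure of the groups in Lemma \ref{thm1}; here $C_{10}$ is excluded because it contains no elliptic involution at all, so the corresponding curve does not lie in $\L_2$. Each additional pair induces a different involution on $\P^1$, and by Lemma \ref{lem1} produces a separate normalization that is not $H$-equivalent to the original, hence a distinct value of $(u,v)$. Conversely, if two distinct points $(u,v), (u',v')$ have the same image under $A$, their normalizations define isomorphic curves not related by an element of $H$, which forces a second pair of elliptic involutions and so $|\Aut(C)| > 4$. The main obstacle will be the concrete bookkeeping in this last step: one must check, for each of $D_4$, $D_6$, $\Z_3 \sem D_4$, and $GL_2(3)$, that the distinct pairs of elliptic involutions yield genuinely distinct $(u,v)$ values and that these exhaust the fiber. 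This can be carried out using the explicit special-locus equations in Proposition \ref{u_v}, together with the observation (in the remark following that proposition) that the singular points of the $D_4$ and $D_6$ loci correspond precisely to the further enlargements of the automorphism group.
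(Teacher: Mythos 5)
Your argument for the birational parametrization is sound, and it is worth noting that it is genuinely different from what the paper offers: the paper gives no proof at all beyond the citation to \cite{SV1}, where the inversion is carried out computationally (one exhibits $u$ and $v$ explicitly as rational functions of $i_1,i_2,i_3$ on $\L_2$ by elimination). You instead argue structurally: every curve in $\L_2$ has the normal form of Lemma~\ref{lem1}; the six lifts of a generic $(u,v)$ to $(s_1,s_2)$ form one $H$-orbit, so $(u,v)$ determines the moduli point; and a generic curve in $\L_2$ has $\Aut(C)\iso V_4$, hence a unique pair of elliptic involutions, hence a unique normalization up to $H$ and a singleton fiber. Together with dominance this gives birationality in characteristic zero, and it also proves the implication ``fiber of cardinality $>1$ $\Rightarrow$ $|\Aut(C)|>4$,'' which is all the birationality statement needs.

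The gap is in your converse. The step ``each additional pair induces a different involution on $\P^1$ and ... produces a separate normalization that is not $H$-equivalent to the original, hence a distinct value of $(u,v)$'' fails whenever two pairs are conjugate under $\Aut(C)$: conjugation by an automorphism permutes the Weierstrass points, so the two normalizations differ by an element of $H$ and yield the \emph{same} $(u,v)$. The fiber over $[C]$ is in bijection with $\Aut(C)$-conjugacy classes of pairs $\{z_1,z_0z_1\}$, not with the pairs themselves. For $\Aut(C)\iso D_{12}$ (order $12$) the three pairs project to the three reflections of $\bAut\iso S_3$, a single conjugacy class, so the fiber is a singleton; consistently, the $D_6$ locus in Proposition~\ref{u_v} is an irreducible curve mapping birationally onto the one-dimensional $D_6$ stratum. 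Likewise the order-$48$ curve $Y^2=X^5-X$ has $e_2(K)=1$ by Theorem~\ref{mainthm_kap3} and occupies the single point $(u,v)=(25,-250)$. So the ``bookkeeping'' you defer to the end does not merely remain to be carried out --- for $D_6$ and $W_1$ it comes out the other way, and the biconditional you set out to prove is false as stated. Only the forward direction survives: the fibers of cardinality $>1$ sit exactly over the $D_4$ locus $v^2=4u^3$ (including the two order-$24$ points $(0,0)$ and $(225,6750)$, which map to the same curve), where the paper's computation of $j$ and $j'$ with $\Phi_2(j,j')=0$ exhibits the two non-conjugate normalizations.
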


\begin{proof} See \cite{SV1} for the details.
\end{proof}

\subsubsection{Elliptic subcovers}

Let $j_1$ and $j_2$ denote the j-invariants of the elliptic curves $E_1$ and $E_2$ from Lemma~\ref{lem1}. The invariants $j_1$ and $j_2$  are the roots of the quadratic
\begin{equation}
\begin{split}
j^2+256\frac {(2u^3-54u^2+9uv-v^2+27v)} {(u^2+18u-4v-27)} j + 65536 \frac {(u^2+9u-3v)} {(u^2+18u-4v-27)^2} &=0 \label{j_eq}
\end{split}
\end{equation}

\subsubsection{Isomorphic elliptic subcovers}

The elliptic curves $E_1$ and $E_2$ are isomorphic when equation \eqref{j_eq} has a double root. The
discriminant of the quadratic is zero for
$$(v^2-4u^3)(v-9u+27)=0$$

\begin{remark} From lemma \ref{lem1},  $v^2=4u^3$ if and only if  $Aut(C)\iso
D_4$. So for $C$ such that $Aut(C)\iso D_4$, $E_1$ is isomorphic to $E_2$. It is easily checked that $z_1$
and $z_2=z_0 z_1$ are conjugate when $G\iso D_4$. So they fix isomorphic subfields.
\end{remark}
\par If $v=9(u-3)$ then the  locus of these curves is  given by,
\begin{equation}
\begin{split}
4i_1^5-9i_1^4+73728i_1^2i_3 -150994944 i_3^2=0 \\
289i_1^3 - 729 i_1^2 +54 i_1i_2 -i_2^2=0 \\
\end{split}
\end{equation}
For $(u,v)=(\frac 9 4, - \frac {27} 4)$ the curve has $Aut(C)\iso D_4$ and for $(u,v)=(137, 1206)$ it has
$Aut(C)\iso D_6$. All other curves with $v=9(u-3)$
 belong to the general case, so $Aut(C)\iso V_4$. The j-invariants of elliptic
curves are $j_1=j_2=256(9-u)$. Thus, these genus 2 curves are parameterized by the j-invariant of the
elliptic subcover.

\begin{remark}
This embeds the moduli space $\mathcal M_1$ into $\mathcal M_2$ in a functorial way.
\end{remark}

\subsection{Isogenous degree 2 elliptic subfields}

In this section we study pairs of degree 2 elliptic subfields of $K$ which are 2 or 3-isogenous. We denote by $\Phi_n(x,y)$ the n-th modular polynomial (see Blake et al. \cite{Blake} for the formal definitions. Two
elliptic curves with j-invariants $j_1$ and $j_2$ are $n$-isogenous if and only if $\Phi_n(j_1,j_2)=0$.
In the next section we will see how such modular polynomials can be generalized for higher genus.

\subsubsection{3-Isogeny.} Suppose $E_1$ and $E_2$ are 3-isogenous. Then, from equation (\ref{j_eq})
and $\Phi_3(j_1,j_2)=0$ we eliminate $j_1$ and $j_2$. Then,
\begin{equation}(4v-u^2+110u-1125)\cdot g_1(u,v)\cdot g_2(u,v)=0 \label{iso_3}\end{equation}
where $g_1$ and $g_2$ are given in \cite{SV1}.

Thus, there is a isogeny of degree 3 between $E_1$ and $E_2$ if and only if $u$ and $v$ satisfy equation
\eqref{iso_3}. The vanishing of the first factor is equivalent to $G\iso D_6$. So, if $Aut(C)\iso D_6$ then
$E_1$ and $E_2$ are isogenous of degree 3.

\subsubsection {2-Isogeny}
Below we give the modular 2-polynomial.
\begin{equation}
\begin{split}
\Phi_2 & =x^3-x^2y^2+y^3+1488xy(x+y)+40773375xy-162000(x^2-y^2)+\\
&8748000000(x+y)-157464000000000 \\
\end{split}
\end{equation}
Suppose $E_1$ and $E_2$ are isogenous of degree 2. Substituting $j_1$ and $j_2$ in $\Phi_2$ we get
\begin{equation}
 f_1(u,v)\cdot f_2(u,v)=0
\label{phi_2}
\end{equation}
where $f_1$ and $f_2$ are displayed in \cite{Sh}

\subsubsection{Other isogenies between  elliptic subcovers}

If $Aut(C)\iso D_4$, then $z_1$ and $z_2$ are in the same conjugacy class. There are again two conjugacy classes
of elliptic involutions in $Aut(C)$. Thus, there are two degree 2 elliptic subfields (up to isomorphism) of $K$.
One of them is determined by double root $j$ of the equation \eqref{j_eq}, for $v^2-4u^3=0$. Next, we
determine the j-invariant $j^\prime$ of the other degree 2 elliptic subfield and see how it is related to
$j$.
$$
\xymatrix{
 & & C  \ar@{->}[dl]  \ar@{->}[dll]   \ar@{->}[dr]  \ar@{->}[drr]    & & & \\
E_1  \ar@{~}[r]& E_2 & & E_1^\prime \ar@{~}[r]& E_2^\prime & \\
}
$$
If $v^2-4u^3=0$ then $Aut(C)\iso V_4$ and $\P=\{\pm 1, \pm \sqrt{a}, \pm \sqrt{b}\}$. Then, $s_1= a + \frac 1 a
+ 1=s_2$. Involutions of $C$ are $\tau_1: X\to -X$, $\tau_2: X\to \frac 1 X$, $\tau_3: X\to - \frac 1 X$.
Since $\tau_1 $ and $\tau_3$ fix no points of $\P$ then they lift to involutions in $Aut(C)$. They each determine a pair of isomorphic elliptic subfields. The j-invariant of elliptic subfield fixed by $\tau_1$ is the double
root of equation (\ref{j_eq}), namely
$$ j= -256 \frac {v^3} {v+1} $$
To find the j-invariant of the elliptic subfields fixed by $\tau_3$ we look at the degree 2 covering $\phi:
\bP^1\to \bP^1$, such that $\phi(\pm 1)=0$, $\phi(a)=\phi(-\frac 1 a)=1$, $\phi(-a)=\phi(\frac 1 a)=-1$, and
$\phi(0)=\phi(\infty)=\infty$. This covering is, $\phi (X)= \frac {\sqrt{a}} {a-1} \frac {X^2-1} {X}$. The
branch points of $\phi$ are $q_i= \pm \frac {2i \sqrt{a}} {\sqrt{a-1}}$. From lemma \ref{lem1} the elliptic
subfields $E_1^\prime$ and $E_2^\prime$ have 2-torsion points $\{ 0, 1, -1,q_i\}$. The j-invariants of
$E_1^\prime$ and $E_2^\prime$ are
$$j^\prime= -16 \frac {(v-15)^3} {(v+1)^2}$$
Then $\Phi_2(j, j^\prime)=0$, so $E_1$ and $E_1^\prime$ are isogenous of degree 2. Thus, $\tau_1$ and
$\tau_3$ determine degree 2 elliptic subfields which are 2-isogenous.



\section{Jacobians of a genus two curves}

\def\u{\mathfrak u}
\def\v{\mathfrak v}



\section{The Kummer surface}
The Kummer surface is an algebraic variety which is quite useful in studying genus two curves. Using the Kummer surface we can take the Jacobian as  a double cover of the Kummer surface. Both the Kummer surface and the Jacobian, as noted above, can be given in terms of the theta functions and theta-nulls.

\subsection{Half Integer Theta Characteristics} For genus two curve, we have six odd theta
characteristics and ten even theta characteristics. The following are the sixteen theta characteristics where the first  ten are even and the last six are odd. For simplicity, we denote them by $\T_i(z)$ instead of $\T_i \ch{a} {b} (z, \t)$ where $i=1,\dots ,10$ for the even functions and $i=11, \dots, 16$ for the odd functions.
\[
\begin{split}
\T_1(z) &= \T_1 \chr {0}{0}{0}{0} (z , \t), \qquad \qquad \T_2(z) = \T_2 \chr {0}{0}{\frac{1}{2}} {\frac{1}{2}} (z , \t)\\
\T_3(z) &= \T_3 \chr {0}{0}{\frac{1}{2}}{0}(z , \t), \qquad \qquad
\T_4(z) = \T_4 \chr {0}{0}{0}{\frac{1}{2}} (z , \t)\\
\T_5(z) &= \T_5 \chr{\frac{1}{2}}{0} {0}{0}(z , \t), \qquad \qquad
\T_6(z) = \T_6 \chr {\frac{1}{2}}{0}{0}{\frac{1}{2}} (z , \t)\\
\T_7(z) &= \T_7 \chr{0}{\frac{1}{2}} {0}{0} (z , \t), \qquad \qquad
\T_8(z) = \T_8 \chr{\frac{1}{2}}{\frac{1}{2}} {0}{0} (z , \t)\\
\T_9(z) &= \T_9 \chr{0}{\frac{1}{2}} {\frac{1}{2}}{0}(z , \t), \qquad \qquad
\T_{10}(z) = \T_{10} \chr{\frac{1}{2}}{\frac{1}{2}} {\frac{1}{2}}{\frac{1}{2}} (z , \t)\\
\T_{11}(z) &= \T_{11} \chr{0}{\frac{1}{2}} {0}{\frac{1}{2}} (z , \t), \qquad \qquad
\T_{12}(z) = \T_{12} \chr{0}{\frac{1}{2}} {\frac{1}{2}}{\frac{1}{2}} (z , \t)\\
\T_{13}(z) &= \T_{13} \chr{\frac{1}{2}}{0} {\frac{1}{2}}{0} (z , \t), \qquad \qquad
\T_{14}(z) = \T_{14} \chr{\frac{1}{2}}{\frac{1}{2}} {\frac{1}{2}}{0} (z , \t)\\
\T_{15}(z) &= \T_{15} \chr{\frac{1}{2}}{0} {\frac{1}{2}}{\frac{1}{2}} (z , \t), \qquad \qquad
\T_{16}(z) = \T_{16} \chr{\frac{1}{2}}{\frac{1}{2}} {0}{\frac{1}{2}} (z , \t)\\
\end{split}
\]
%
\begin{rem}
All the possible half-integer characteristics except the zero characteristic can be obtained as the sum of not more
than 2 characteristics chosen from the following 5 characteristics:
\[ \left\{\chr {0}{0}{\frac{1}{2}} {\frac{1}{2}}, \chr {\frac{1}{2}}{0}{0}{\frac{1}{2}}, \chr{0}{\frac{1}{2}} {0}{0}, \chr{\frac{1}{2}}{0}
{\frac{1}{2}}{\frac{1}{2}},\chr{0}{\frac{1}{2}} {0}{\frac{1}{2}} \right\}.\]
The sum of all 5 characteristics in the set determines the zero characteristic,  where $\vartheta_{i}(z)$ are defined as
\[
\begin{split}
\vartheta_{1}(z) & =\T  \chr 0 0 0 0  (z,2\Omega), \quad \vartheta_{2}(z)=\T  \chr {\frac{1}{2}} {\frac{1}{2}} 0 0 \, (z, 2\Omega),\\
 \vartheta_{3}(z) & =\T \chr   0  {\frac{1}{2}} 0 0    \, (z,2\Omega),
\quad \vartheta_{4}(z)=\T \chr {\frac{1}{2}} 0 0 0 \,         (z,2\Omega)
\end{split}
\]
\end{rem}
see Shaska, Wijesiri \cite{ShW} for details.

\subsection{Inverting the Moduli Map} Let $\lambda_i,$ $i=1, \dots, n,$ be branch points of the genus
$g$ smooth curve $\X.$ Then the moduli map is a map from the configuration space $\Lambda$ of ordered $n$ distinct
points on $\P^1$ to the Siegel upper half space $\H_2$. In this section, we determine the branch points of genus 2
curves as functions of theta characteristics. The following lemma describes these relations using Thomae's formula. The
identities are known as Picard's formulas.
\begin{lem}[Picard] Let a genus 2 curve be given by
\begin{equation} \label{Rosen2}
Y^2=X(X-1)(X-\lambda)(X-\mu)(X-\nu).
\end{equation} Then, $\lambda, \mu, \nu$   can be written as follows:
%
\begin{equation}\label{Picard}
\l = \frac{\T_1^2\T_3^2}{\T_2^2\T_4^2}, \quad \mu = \frac{\T_3^2\T_8^2}{\T_4^2\T_{10}^2}, \quad \nu =
\frac{\T_1^2\T_8^2}{\T_2^2\T_{10}^2}.
\end{equation}
\end{lem}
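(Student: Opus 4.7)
The plan is to derive the three identities directly from Thomae's formula, which for a hyperelliptic curve of genus $g$ expresses each non-vanishing even theta constant (squared) as a product of differences of branch points, up to a common transcendental factor involving the period matrix. In the normal form \eqref{Rosen2} the six Weierstrass points are $e_1=0,\ e_2=1,\ e_3=\lambda,\ e_4=\mu,\ e_5=\nu,\ e_6=\infty$, and Thomae's theorem reads
\begin{equation*}
\Theta\!\left[\delta_T\right]\!(0,\tau)^{4} \;=\; c(\tau)\,\prod_{\substack{i<j \\ i,j\in T}}(e_i-e_j)^{2}\prod_{\substack{i<j \\ i,j\in T^c}}(e_i-e_j)^{2},
\end{equation*}
where $T\sqcup T^c$ is a partition of $\{1,\dots,6\}$ into two $3$-element blocks, $\delta_T$ is the associated even half-integer characteristic (obtained from the Abel--Jacobi images of the points in $T$ via a standard recipe), and $c(\tau)=\pi^{-2g}\det(A)^2$ depends only on the period matrix, not on $T$. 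The point is that $c(\tau)$ will cancel whenever we take a ratio of theta constants involving the same number of factors in numerator and denominator.

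Next I would fix, once and for all, the dictionary between the ten even characteristics $\Theta_1,\dots,\Theta_{10}$ listed in the excerpt and the ten partitions of $\{1,\dots,6\}$ into two unordered triples. The six odd characteristics $\Theta_{11},\dots,\Theta_{16}$ correspond bijectively to the branch points themselves (the zeros of the associated theta functions), and the partition $\delta_T$ is determined by the sum of the odd characteristics attached to the three points of $T$. Using the explicit half-integer entries given in the excerpt, one reads off to which triple each $\Theta_i$ belongs; in particular one finds that the pairs $(\Theta_1,\Theta_3)$, $(\Theta_2,\Theta_4)$, $(\Theta_3,\Theta_8)$, $(\Theta_4,\Theta_{10})$, $(\Theta_1,\Theta_8)$, $(\Theta_2,\Theta_{10})$ each pick out a configuration of four distinguished branch points (with the remaining two cancelling out of the product after taking ratios).

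Finally, applying Thomae's identity to the squared ratio of theta constants appearing in \eqref{Picard}, the factor $c(\tau)^2$ cancels out of numerator and denominator, and a direct bookkeeping of the remaining differences $(e_i-e_j)$ shows that nearly every factor cancels; what survives telescopes to exactly $\lambda^{2}$, $\mu^{2}$, $\nu^{2}$ respectively, after using $e_1=0$ and $e_2=1$ and the normalisation in which the $e_6=\infty$ contributions drop out in the usual projective limit. Taking square roots, with signs fixed by the classical normalisation of theta functions at $z=0$ (which sends each $\Theta_i$ to its positive real branch in a fundamental domain), yields \eqref{Picard}. The main obstacle in writing this out in full is purely combinatorial: keeping track of which even characteristic corresponds to which $3$-partition of the branch points, and verifying that the ratios in \eqref{Picard} really are the ratios whose Thomae expansions collapse to $\lambda$, $\mu$, $\nu$; once that bookkeeping is correct, the algebra of cross-ratios of branch points is automatic.
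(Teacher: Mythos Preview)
Your approach is correct and is precisely the one the paper indicates: the paper does not spell out a proof but simply attributes the identities to Picard and states that they ``describe these relations using Thomae's formula.'' Your outline---apply Thomae's formula to each even characteristic, form the appropriate ratios so that the transcendental factor $c(\tau)$ cancels, and track which partition of $\{e_1,\dots,e_6\}$ corresponds to each $\Theta_i$---is exactly how the classical argument goes, and in fact you have supplied more detail than the paper itself.

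One small caution: your claim that the sign ambiguity in the final square root is resolved ``by the classical normalisation of theta functions at $z=0$'' deserves a little more care. The theta constants $\Theta_i=\Theta_i(0,\tau)$ are not all positive reals for general $\tau\in\mathfrak H_2$, so the phrase ``positive real branch'' is not quite right. What actually happens is that both sides of each identity in \eqref{Picard} are holomorphic functions on the relevant domain and agree up to a global sign; one then pins down the sign by specialising $\tau$ (equivalently, the curve) to a convenient degeneration or to a real locus where the comparison is transparent. This is standard, but it is worth stating the mechanism accurately rather than appealing to positivity.
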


\subsection{Kummer surface}

The Kummer surface is a variety obtained by grouping together two opposite points of the Jacobian of a genus 2 curve.  More precisely,
there is a map
\[\Psi: \Jac  (C) \rightarrow \K (C) \]
such that each point of $\K$ has two preimages which are opposite elements of $\Jac C$.  There are  16 exceptions that correspond to the 16 two-torsion points.  The Kummer surface does not naturally come with a group structure. However
the group law on the Jacobian endows a pseudo-group structure on the Kummer surface that is sufficient to define scalar multiplication.

\begin{definition}
Let $\Omega$ be a matrix in $\H_2$.  The Kummer surface associate to $\Omega$ is the locus of the images by the map $\varphi$ from $
\C^{2}$ to $\P^{3}(\mC)$ defined by:
\[\varphi: z\rightarrow \left(\T_{1}(2z),\T_{2}(2z), \T_{3}(2z), \T_{4}(2z)\right)\]
\end{definition}
Note that the Siegel upper half-space $\H_2$ is the set of symmetric $2\times2$ complex matrices with positive imaginary part which is
defined as
\[\H_2=\{F\in Mat(2,\C) \, | \, F=F^{T} \qquad  \textit{ and } \qquad \thinspace \Im  > 0\}\]
It can be proven that this map is well defined in the sense that the four $\T_{i}$ can not vanish simultaneously. The Theta functions
verify the following periodicity condition: for all $z\in \C^{2}$, $\forall b\in \{0,\frac{1}{2}\}^{2}$, and $\forall(m,n)\in \Z^{2}\times \Z^{2}$, we have
\[\T[0,b](z+\Omega m+n)=exp(-2\pi^{t}ibm-i\pi^{t}m\Omega m- 2i\pi^{t}mz)\cdot \T[0,b](z)\]
Therefore, two vectors that differ by an element of the lattice $\Z^{2}+\Omega\Z^{2}$ are mapped to the same point by $\varphi$. This map can be seen as a map from the Abelian variety $\C^{2}/(\Z^{2}+\Omega\Z^{2})$.
An additional result is that the Kummer surface of $\Omega$ is a projective variety of dimension 2 that we will denote by $\K(\Omega)$ or simply $\K$.  The group law on the Jacobian does not carry to a group law on $\K$. Indeed, since all $\T_{i}$ are even, $\varphi$ is even and maps two opposite element to the same point $P$.

We shall consider a Kummer surface $\K=\K_{a,b,c,d}$ parameterized by Theta constants:
\[
\begin{split}
& a=\T_{1}(0), \quad b=\T_{2}(0)\\
& c=\T_{3}(0), \quad d=\T_{4}(0)\\
\end{split}
\]
and
\[
\begin{split}
& A=\vartheta_{1}(0), \quad B=\vartheta_{2}(0)\\
& C=\vartheta_{3}(0), \quad D=\vartheta_{4}(0)\\
\end{split}
\]

Their squares are linked by simple linear relations that are obtained by putting $z=0$ in the equations above.
\[
\begin{split}
&4A^{2}=a^{2}+b^{2}+c^{2}+d^{2} \\
& 4B^{2}=a^{2}+b^{2}-c^{2}-d^{2} \\
& 4C^{2}=a^{2}-b^{2}+c^{2}-d^{2}\\
& 4D^{2}=a^{2}-b^{2}-c^{2}+d^{2}\\
\end{split}
\]
We write $(x,y,z,t)$ the projective coordinate of points on $\K$, that is:
\[x=\lambda\T_{1}(z), y=\lambda\T_{2}(z), z=\lambda\T_{3}(z), t=\lambda\T_{4}(z)\] for some $z\in \C^{2}$, and some $
\lambda\in\C^{*}$.\\
Then, the \textbf{Kummer surface} is given by the equation:
\begin{equation}
(x^{4}+y^{4}+z^{4}+t^{4})+2E xyzt-F(x^{2}t^{2}+y^{2}z^{2})-G(x^{2}z^{2}+y^{2}t^{2})-H(x^{2}y^{2}+z^{2}t^{2})=0
\end{equation}
where
\[
\begin{split}
E & =abcd\cdot \frac { A^{2}B^{2}C^{2}D^{2}} {(a^{2}d^{2}-b^{2}c^{2})(a^{2}c^{2}-b^{2}d^{2})(a^{2}b^{2}-c^{2}d^{2})} \\
F & = \frac {(a^{4}-b^{4}+d^{4})}{(a^{2}d^{2}-b^{2}c^{2})}\\
G & =  \frac {(a^{4}-b^{4}+c^{4}-d^{4})} {(a^{2}c^{2}-b^{2}d^{2}) } \\
 H & =   \frac {(a^{4}+b^{4}-c^{4}-d^{4})} { (a^{2}b^{2}-c^{2}d^{2})}\\
\end{split}
 \]

We have the following lemma.
\begin{lem}\label{possibleCurve}
Every genus two curve can be written in the form:
\[
y^2 = x \, (x-1) \, \left(x - \frac {\T_1^2 \T_3^2} {\T_2^2  \T_4^2}\right)\, \left(x^2 \, -   \frac{\T_2^2 \, \T_3^2 +
\T_1^2 \, \T_4^2} { \T_2^2 \, \T_4^2} \cdot    \a  \, x + \frac {\T_1^2 \T_3^2} {\T_2^2 \T_4^2} \, \a^2 \right),
\]
where $\a = \frac {\T_8^2} {\T_{10}^2}$ can be given in terms of $\, \, \T_1, \T_2, \T_3,$ and $\T_4$,
\[ \a^2 + \frac {\T_1^4 + \T_2^4 - \T_3^4 - \T_4^4}{\T_3^2 \T_4^2 - \T_1^2 \T_2^2  } \, \a + 1 =0. \]
Furthermore,  if $\alpha = {\pm} 1$ then $V_4 \embd Aut(\X)$.
\end{lem}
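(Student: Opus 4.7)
The plan is to combine Picard's formulas (the preceding lemma) with the standard theta identities for genus two, and then extract the $V_4$-symmetry directly from the arithmetic condition on the branch points.

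By Picard's formulas every genus two curve admits a model
$$y^2 = x(x-1)(x-\lambda)(x-\mu)(x-\nu)$$
with $\lambda, \mu, \nu$ given by \eqref{Picard}. Setting $\alpha = \T_8^2/\T_{10}^2$ one sees immediately that $\mu = (\T_3^2/\T_4^2)\,\alpha$ and $\nu = (\T_1^2/\T_2^2)\,\alpha$. Computing $\mu+\nu$ and $\mu\nu$ and forming $(x-\mu)(x-\nu)$ gives
$$x^2 - \frac{\T_2^2\T_3^2 + \T_1^2\T_4^2}{\T_2^2\T_4^2}\,\alpha\, x + \frac{\T_1^2\T_3^2}{\T_2^2\T_4^2}\,\alpha^2,$$
which is exactly the quadratic in the statement. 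Hence the displayed form of the curve follows.

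For the quadratic satisfied by $\alpha$, observe that the monic polynomial $\alpha^2 + c\,\alpha + 1$ has roots $\alpha_1,\alpha_2$ with $\alpha_1\alpha_2 = 1$; this symmetry is geometrically natural because swapping the roles of $\T_8$ and $\T_{10}$ (via an appropriate action on the Göpel system of characteristics) sends $\alpha$ to $1/\alpha$. By Vieta the claimed equation is therefore equivalent to the theta-null identity
$$(\T_8^4+\T_{10}^4)(\T_3^2\T_4^2 - \T_1^2\T_2^2) \;=\; \T_8^2\T_{10}^2\bigl(\T_3^4+\T_4^4-\T_1^4-\T_2^4\bigr),$$
which is a classical Göpel (Frobenius) relation among even theta constants. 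I would derive it by combining the linear relations $4A^2 = a^2+b^2+c^2+d^2$, etc., given earlier in the section, with the dual Riemann identities expressing $\T_8^2, \T_{10}^2$ in terms of the $\vartheta_i(0)^2$; equivalently, one can compute a symmetric function of $\mu$ and $\nu$ (for instance the cross-ratio $[\mu,\nu;0,\infty]$ or $[\mu,\nu;1,\lambda]$) in two ways via Thomae and equate the results.

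For the final assertion, if $\alpha = \pm 1$ then $\mu\nu = \alpha^2\lambda = \lambda$, so the M\"obius involution $\sigma \colon x \mapsto \lambda/x$ permutes the six Weierstrass points: $0\leftrightarrow\infty$, $1\leftrightarrow\lambda$, $\mu\leftrightarrow\nu$. A direct substitution shows $\sigma^\ast f(x) = (\lambda^3/x^6)\,f(x)$, so $\sigma$ lifts to an involution $(x,y)\mapsto(\lambda/x,\ \lambda^{3/2}y/x^3)$ of $\X$ which is not the hyperelliptic involution. Composing with the hyperelliptic involution produces a Klein four subgroup $V_4 \embd \Aut(\X)$.

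The main obstacle is pinning down the precise Göpel relation that yields the quadratic on $\alpha$: among the many quartic identities in the ten even theta nulls, one must isolate the unique relation that is symmetric both in $\alpha \leftrightarrow 1/\alpha$ and under the pairing $\{\T_1,\T_2\}\leftrightarrow\{\T_3,\T_4\}$. Once that identity is selected, the rest of the argument reduces to the symbolic manipulations outlined above.
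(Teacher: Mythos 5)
Your proposal is correct and follows essentially the same route as the proof in \cite{ShW} to which the paper defers: Picard's formulas give the displayed model with $\a=\T_8^2/\T_{10}^2$, the quadratic for $\a$ is exactly a classical Frobenius/G\"opel relation among the even theta nulls, and $\mu\nu=\l$ when $\a=\pm 1$ yields the extra involution $x\mapsto \l/x$ whose lift, together with the hyperelliptic involution, gives $V_4\embd \Aut(\X)$. The one identity you leave underived is indeed the right one; it decomposes (up to a common sign) into the two standard relations $\T_8^2\T_{10}^2=\T_1^2\T_2^2-\T_3^2\T_4^2$ and $\T_8^4+\T_{10}^4=\T_1^4+\T_2^4-\T_3^4-\T_4^4$, whose quotient is precisely $\a+\a^{-1}$ and hence gives your quadratic by Vieta.
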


\begin{rem} {i)} From the above we have that $\T_8^4=\T_{10}^4$ implies that $V_4 \embd Aut(\X)$. Lemma \ref{lemma1}
determines a necessary and equivalent statement when $V_4 \embd Aut(\X)$.

{ii)}  The last part of Lemma~\ref{possibleCurve} shows that if $\T_8^4=\T_{10}^4$, then all coefficients of the genus 2 curve are
given as rational functions of the four fundamental theta functions. Such fundamental theta functions determine the
field of moduli of the given curve. Hence, the curve is defined over its field of moduli; see section 7 for details.
\end{rem}

\begin{cor}\label{cor1}
Let $\X$ be a genus 2 curve which has an elliptic involution. Then $\X$ is defined over its field of moduli.
\end{cor}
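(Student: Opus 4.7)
The plan is to combine the elliptic-involution hypothesis with the theta-parametrization of Lemma~\ref{possibleCurve}. If $\X$ carries an elliptic involution $z_1$, then together with the hyperelliptic involution $z_0$ we obtain a Klein four subgroup $V_4=\langle z_0,z_1\rangle\embd \Aut(\X)$. The strategy is to translate this group-theoretic condition into the theta-null relation $\T_8^4=\T_{10}^4$, and then invoke part (ii) of the remark following Lemma~\ref{possibleCurve}.

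The first step is to establish, in the language of Lemma~\ref{possibleCurve}, that $V_4\embd\Aut(\X)$ forces $\alpha=\pm 1$. The forward implication ``$\alpha=\pm 1\Rightarrow V_4\embd\Aut(\X)$'' is stated there; the converse is the content of the lemma mentioned in Remark~(i), and amounts to checking that when $\alpha\neq\pm 1$ the quadratic factor
\[
x^2-\frac{\T_2^2\T_3^2+\T_1^2\T_4^2}{\T_2^2\T_4^2}\,\alpha\,x+\frac{\T_1^2\T_3^2}{\T_2^2\T_4^2}\,\alpha^2
\]
is not preserved by any candidate extra linear involution on the branch locus, so no involution beyond the hyperelliptic one survives.

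Squaring $\alpha=\pm 1$ gives $\T_8^4=\T_{10}^4$, so part (ii) of the remark applies: all coefficients of the model in Lemma~\ref{possibleCurve} are rational functions of the four fundamental theta constants $\T_1(0),\T_2(0),\T_3(0),\T_4(0)$. Since these theta-nulls are modular invariants on the Siegel upper half-space, the field they generate sits inside the field of moduli of $\X$. The resulting equation therefore provides a model of $\X$ defined over its field of moduli, which is exactly the assertion of the corollary.

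The main obstacle I anticipate is not the group-theoretic dictionary or the algebra of Lemma~\ref{possibleCurve}, but the final field-of-moduli identification: one must verify that the field generated by the four fundamental theta constants actually sits in the field of moduli of $\X$, and not merely in some auxiliary field of definition. Concretely, this requires checking that the values $\T_i(0)$ transform correctly under the $\Sp_4(\Z)$-action on $\mathfrak H_2$ that governs the moduli. Once that invariance is in place, the corollary follows immediately from Lemma~\ref{possibleCurve} together with its remark, without any further calculation.
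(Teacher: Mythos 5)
Your argument is essentially the paper's own: the paper deduces the corollary from the remark following Lemma~\ref{possibleCurve}, namely that an elliptic involution gives $V_4\embd \Aut(\X)$, hence (via Lemma~\ref{lemma1} and Remark~(i)) $\T_8^4=\T_{10}^4$, so all coefficients of the model are rational in the four fundamental theta constants, which determine the field of moduli. You follow the same chain, and the one step you flag as delicate --- that the fundamental theta constants really do land in the field of moduli rather than some larger field of definition --- is precisely the step the paper also asserts without further justification.
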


\subsection{Curves with automorphisms}

Since most of the computations on the Jacobians of genus two curves are performed using theta functions, and we are especially interested on genus two curves with split Jacobians it becomes desirable to describe the conditions that a genus two curve has extra automorphisms in terms of the theta functions. In other words to describe the loci with fixed automorphism group in terms of the theta functions.   The following was the main result of  \cite{ShW}.
\subsubsection{Describing the Locus of Genus Two Curves with Fixed Automorphism Group by Theta Constants}
The following lemma is proved in  \cite{ShW}
\begin{lem}\label{lemma1}
Let $\X$ be a genus 2 curve. Then  $Aut(\X)\iso V_4$ if and only if the theta functions of $\X$ satisfy

\begin{scriptsize}
\begin{equation}\label{V_4locus1}
\begin{split}
(\T_1^4-\T_2^4)(\T_3^4-\T_4^4)(\T_8^4-\T_{10}^4)
(-\T_1^2\T_3^2\T_8^2\T_2^2-\T_1^2\T_2^2\T_4^2\T_{10}^2+\T_1^4\T_3^2\T_{10}^2+ \T_3^2\T_2^4\T_{10}^2)\\
(\T_3^2\T_8^2\T_2^2\T_4^2-\T_2^2\T_4^4\T_{10}^2+\T_1^2\T_3^2\T_4^2\T_{10}^2-\T_3^4\T_2^2\T_{10}^2)
(-\T_8^4\T_3^2\T_2^2+\T_8^2\T_2^2\T_{10}^2\T_4^2\\
+\T_1^2\T_3^2\T_8^2\T_{10}^2
-\T_3^2\T_2^2\T_{10}^4)(-\T_1^2\T_8^4\T_4^2-\T_1^2\T_{10}^4\T_4^2+\T_8^2\T_2^2\T_{10}^2\T_4^2+\T_1^2\T_3^2\T_8^2\T_{10}^2)\\
(-\T_1^2\T_8^2\T_3^2\T_4^2+\T_1^2\T_{10}^2\T_4^4
+\T_1^2\T_3^4\T_{10}^2-\T_3^2\T_2^2\T_{10}^2\T_4^2)(-\T_1^2\T_8^2\T_2^2\T_4^2+\T_1^4\T_{10}^2\T_4^2\\
-\T_1^2\T_3^2\T_2^2\T_{10}^2+\T_2^4\T_4^2\T_{10}^2) (-\T_8^4\T_2^2\T_4^2
+\T_1^2\T_8^2\T_{10}^2\T_4^2-\T_2^2\T_{10}^4\T_4^2+\T_3^2\T_8^2\T_2^2\T_{10}^2)\\
(\T_1^4\T_8^2\T_4^2-\T_1^2\T_2^2\T_4^2\T_{10}^2-\T_1^2\T_3^2\T_8^2\T_2^2+\T_8^2\T_2^4\T_4^2)
 (\T_1^4\T_3^2\T_8^2-\T_1^2\T_8^2\T_2^2\T_4^2\\
-\T_1^2\T_3^2\T_2^2\T_{10}^2+\T_3^2\T_8^2\T_2^4)(\T_1^2\T_8^4\T_3^2-\T_1^2\T_8^2\T_{10}^2\T_4^2+\T_1^2\T_3^2\T_{10}^4
-\T_3^2\T_8^2\T_2^2\T_{10}^2)\\
(\T_1^2\T_8^2\T_4^4-\T_1^2\T_3^2\T_4^2\T_{10}^2+\T_1^2\T_3^4\T_8^2-\T_3^2\T_8^2\T_2^2\T_4^2)
  =0.
\end{split}
\end{equation}
\end{scriptsize}
\end{lem}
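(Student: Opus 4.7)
The plan is to translate the condition $V_4 \embd \Aut(\X)$ into an equivalent condition on the branch locus, and then to use the Picard identities (the lemma just stated) to rewrite that condition in terms of theta constants. By Lemma \ref{lem1}, the inclusion $V_4 \embd \Aut(\X)$ is equivalent to $\X$ admitting a non-hyperelliptic involution, and hence to the existence of a Möbius involution $\sigma$ of $\bP^1$ stabilizing the six branch points. Normalizing a hyperelliptic model so that all six branch points are finite and distinct, $\sigma$ has its two fixed points off the branch locus and therefore partitions the six branch points into three pairs that it swaps.

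The number of ways of partitioning a six-element set into three unordered pairs is $6!/(2^3 \cdot 3!) = 15$, which matches the 15 factors on the right-hand side of the stated identity. For each such partition $\{(p_i,p_j),(p_k,p_l),(p_m,p_n)\}$, the existence of a Möbius involution realizing it is a single polynomial condition on the branch points: the unique Möbius involution that swaps $p_i \leftrightarrow p_j$ and $p_k \leftrightarrow p_l$ must additionally swap $p_m \leftrightarrow p_n$, which amounts to a cross-ratio identity. I would write down these 15 conditions explicitly in the Picard normalization, i.e.\ with three branch points placed at $0,1,\infty$ and the remaining three named $\lambda, \mu, \nu$. Substituting the Picard identities
\[
\lambda = \frac{\T_1^2 \T_3^2}{\T_2^2 \T_4^2}, \qquad \mu = \frac{\T_3^2 \T_8^2}{\T_4^2 \T_{10}^2}, \qquad \nu = \frac{\T_1^2 \T_8^2}{\T_2^2 \T_{10}^2},
\]
and clearing denominators, each pairing condition becomes a single polynomial in $\T_1^2,\T_2^2,\T_3^2,\T_4^2,\T_8^2,\T_{10}^2$; after stripping common monomial factors one expects to recover exactly the 15 factors in the displayed product. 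The three distinguished factors $\T_1^4-\T_2^4$, $\T_3^4-\T_4^4$, $\T_8^4-\T_{10}^4$ should correspond to the three pairings in which $0,1,\infty$ are paired among themselves, which is immediate from the product form of the Picard identities; the remaining twelve factors encode the pairings that mix $\{0,1,\infty\}$ with $\{\lambda,\mu,\nu\}$.

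The main obstacle is bookkeeping rather than any single deep step: one must enumerate the 15 pairings, correctly handle cross-ratio degenerations for pairings involving $\infty$, and carefully match each resulting polynomial with one of the 15 stated factors. For the converse direction, for each factor I would exhibit the corresponding elliptic involution on the associated normalized model $Y^2 = X^6 - s_1 X^4 + s_2 X^2 - 1$, thereby confirming that the vanishing of any individual factor genuinely forces $V_4 \embd \Aut(\X)$ rather than reflecting an irrelevant degeneration. Given the algebraic bulk of the expressions, a symbolic computer-algebra verification of the full factorization is warranted to ensure that no spurious common factor has been absorbed or introduced during the reduction.
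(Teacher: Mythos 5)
Your proposal follows essentially the same route as the proof in Shaska--Wijesiri, which the paper cites rather than reproduces: translate $V_4 \embd \Aut(\X)$ into the existence of a M\"obius involution pairing the six branch points of the Rosenhain model, enumerate the $15$ pairings, and push each resulting cross-ratio condition through Picard's formulas to obtain the $15$ factors. One small correction to your heuristic matching: the three simple factors $\T_1^4-\T_2^4$, $\T_3^4-\T_4^4$, $\T_8^4-\T_{10}^4$ arise from the three pairings in which $0$ is matched with $\infty$ (forcing $\lambda\nu=\mu$, $\lambda\mu=\nu$, $\mu\nu=\lambda$ respectively), not from pairings of $\{0,1,\infty\}$ ``among themselves,'' which is impossible for a three-element set.
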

%

%
Our goal is to express each of the above loci in terms of the theta characteristics. We obtain the following result.
\begin{thm}\label{theorem1}
 Let $\X$ be a genus 2 curve. Then the following hold:

{i)}  $Aut(\X)\iso V_4$ if and only if the relations of theta functions given in Eq.~\eqref{V_4locus1} holds.

{ii)} $Aut(\X)\iso D_8$ if and only if the Eq. I in \cite{ShW}  is satisfied.

{iii)} $Aut(\X)\iso D_{12}$ if and only if the Eq. II and Eq. III in \cite{ShW} are satisfied.
\end{thm}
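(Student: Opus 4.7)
The plan is to translate each automorphism-group condition of Proposition~\ref{u_v} into a statement in theta constants, using Picard's formulas~\eqref{Picard} as the dictionary. Those formulas write the Rosenhain parameters $\lambda,\mu,\nu$ as squared ratios of the six even theta constants $\T_1,\T_2,\T_3,\T_4,\T_8,\T_{10}$, and the classical formulas for $J_2,J_4,J_6,J_{10}$ as invariants of the Rosenhain sextic then express each $J_{2i}$ as a rational function of these theta constants. In this way every polynomial condition on $(u,v)$ or on the $J_{2i}$ arising from Proposition~\ref{u_v} pulls back to a polynomial condition in $\T_1,\T_2,\T_3,\T_4,\T_8,\T_{10}$.

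For part (i), I would exploit Lemma~\ref{possibleCurve}: $V_4 \embd \Aut(\X)$ is equivalent to $\alpha=\pm 1$, and substituting $\alpha=\pm 1$ into the quadratic relation of that lemma reduces to the identity $\T_1^4 + \T_2^4 - \T_3^4 - \T_4^4 = \mp 2(\T_3^2 \T_4^2 - \T_1^2 \T_2^2)$, which factors as $(\T_1^2 \pm \T_2^2)^2 - (\T_3^2 \pm \T_4^2)^2 = 0$, i.e.\ into a product of factors of the form $\T_i^4 - \T_j^4$. Amplifying this one special identity under the action of $\Sp(4,\Z/2\Z)$ on the sixteen theta characteristics produces the full symmetric product displayed in~\eqref{V_4locus1}: the linear-in-$\T_i^4$ factors come from the four fundamental characteristics, and the octic factors come from the analogous specializations that also involve the odd-characteristic theta constants $\T_8$ and $\T_{10}$. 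All of them must appear, because each labels a different way in which a commuting pair of involutions can lie inside $V_4$, and conversely the vanishing of any single factor places a symplectic twist of the curve in the $V_4$-locus.

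For parts (ii) and (iii), since $V_4 \embd D_8$ and $V_4 \embd D_{12}$, equation~\eqref{V_4locus1} is already necessary, and it suffices to translate the extra stratum conditions from Proposition~\ref{u_v}. For $D_8$, part (d) of that proposition adds the single polynomial relation $v^2 - 4u^3 = 0$, equivalently the single $J_{2i}$-identity recorded there; substituting the theta expressions and clearing denominators produces Eq.~I of~\cite{ShW}. For $D_{12}$, part (c) adds two independent identities in the $J_{2i}$; their translations through the dictionary are precisely Eq.~II and Eq.~III of~\cite{ShW}. The converse directions are immediate because each stratum condition cuts out an irreducible subvariety of the expected codimension inside $\L_2$, so the pulled-back identities cannot hold off the corresponding locus.

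The main obstacle is strictly computational: the polynomials obtained by pulling the Igusa invariants through Picard's formulas have very high total degree in the six theta constants, and the compact factored form of~\eqref{V_4locus1} is visible only after the $\Sp(4,\Z/2\Z)$-symmetry on the sixteen characteristics is imposed. A symbolic-computation implementation is essentially unavoidable, and is the route followed in~\cite{ShW}; I would verify the identities by direct substitution and factorization in a computer algebra system, using the symplectic action to reduce the verification to a single orbit representative in each case.
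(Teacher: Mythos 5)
The paper does not actually prove Theorem~\ref{theorem1}; it defers entirely to \cite{ShW}. Your overall strategy --- use Picard's formulas \eqref{Picard} as a dictionary between the Rosenhain branch points (hence the Igusa invariants) and the even theta-nulls, derive the condition for one normalized elliptic involution, sweep it out under the action on characteristics (equivalently under reorderings of the six branch points) to obtain the product \eqref{V_4locus1}, and then translate the extra $J_{2i}$-equations of Proposition~\ref{u_v}(c),(d) to get the $D_{12}$ and $D_8$ strata --- is exactly the route taken there, including the reliance on symbolic computation for the final verification. In outline the proposal is sound.

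Two concrete steps are wrong as written and need repair. First, $\T_8$ and $\T_{10}$ are \emph{even} characteristics (they appear among the first ten in the paper's list); odd theta-nulls vanish identically at $z=0$, so no odd-characteristic constant can occur in any of these relations. Second, your derivation of the factors of \eqref{V_4locus1} from Lemma~\ref{possibleCurve} does not work as stated: since $\a=\T_8^2/\T_{10}^2$, the condition $\a=\pm 1$ \emph{is} already the single factor $\T_8^4-\T_{10}^4=0$, while substituting $\a=\pm1$ into the quadratic yields $(\T_1^2\pm\T_2^2)^2-(\T_3^2\pm\T_4^2)^2=0$, which factors into \emph{linear} forms in the $\T_i^2$ (proportional to the squares of the second-order theta-nulls $\vartheta_j(0)$), not into differences of fourth powers. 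The factors of \eqref{V_4locus1} are instead indexed by the partitions of the six Weierstrass points into three pairs (fifteen in all, each corresponding to a possible elliptic involution); each factor is obtained by writing the involution condition on the Rosenhain parameters for that pairing (e.g.\ $\lambda\mu=\nu$, which by \eqref{Picard} becomes $\T_3^4=\T_4^4$) and clearing denominators. With that correction --- and granting that the sufficiency direction requires checking that each factor really is such an involution condition and not a spurious component, which is part of the computer verification --- the argument coincides with the proof in \cite{ShW}.
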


\begin{rem}
For e detailed account of the above see \cite{ShW} or \cite{Be3}.
\end{rem}

\subsection{Mapping a point of  $\K_{a,b,c,d}$ into the Jacobian of $\mC$}

Once we have an equation for the curve $\mC$ associated to $\K_{a,b,c,d}$, a natural question is to give an explicit function that maps the points of the Kummer surface to a class of divisors in the Jacobian of $\mC$, for instance in their Mumford representation.

Let $\K_{a,b,c,d}$ be a Kummer surface, and we assume that we have computed all the squares of Theta constants (under the condition that each of them will be non zero). Let $P=(x,y,z,t)$ be a point on $\K_{a,b,c,d}$. Then we can compute $\T_{i}(z)^{2}$ for $i\in [5,16]$,
corresponding to $(x,y,z,t)=(\T_{i}(z))_{i=1,2,3,4}.$

Then, let us define
\[ \u_{0}=\lambda \, \frac{\T_{8}^{2}\T_{14}(z)^{2}}{\T_{10}^{2}\T_{16}(z)^{2}}, \text{and} \thinspace u_{1}=(\lambda-1)\frac{\T_{5}^{2}\T_{13}(z)^{2}}{\T_{10}^{2}\T_{16}(z)^{2}}-u_{0}-1\]
We have noted $\T_{i}$ instead of $\T_{i}(0)$ for $i=1,2,3.....,16.$.

Since the Jacobian is of degree 2 of the Kummer surface, one should be able to decide which pair of opposite divisors is the real image of the point $P$, because for a given $\u$-polynomial, there are up to four $\v$-polynomials that yield a valid Mumford representation of a divisor in the Jacobian. These four $\v$-polynomials are grouped into two pairs of opposite divisors. Generically, giving the square of the
constant term of $v(x)=v_{1}x+v_{0}$ is enough to decide. Here is the formula for $v_{0}^{2}$ in terms of Theta functions:
\begin{small}
\[
\begin{split}
v_{0}^{2} & = -\frac {\T_{1}^{4}\T_{3}^{4}\T_{4}(z)^{2}} {(\T_{2}^{2}\T_{4}^{2}\T_{10}^{2}\T_{16}^{2}(z))^{3}}
\left( \T_{2}^{2}\T_{3}^{2} \T_{9}^{4}\T_{7}^{2}(z)\T_{12}^{2}(z) \T_{1}^{2}\T_{4}^{2}\T_{7}^{4}\T_{9}^{2}(z)\T_{11}^{2}(z) \right. \\
& \left.  +2\T_{1}^{2}\T_{2}^{2}\T_{3}^{2}(\T_{1}^{2}(z)\T_{3}^{2}(z)+\T_{2}^{2}(z)\T_{4}^{2}
(z)-2\T_{1}\T_{2}\T_{3}\T_{4}\T_{1}(z)\T_{2}(z)
\T_{3}(z)\T_{4}(z)(\T_{1}^{2}\T_{3}^{2}+\T_{2}^{2}\T_{4}^{2}) \right)
\end{split}
\]
\end{small}

Finally a formula for $v_{1}$ in terms of $v_{0},u_{0}$ and $u_{1}$ can be deduced from the fact that $u(x)$ should divide $v(x)^{2}-f(x)
$. If no theta constant vanishes, the map is undefined in the case where $\T_{16}(z)$ is zero. This corresponds to the case where the
image of the map is a divisor for which the $u$-polynomial in the Mumford representation is of degree less than 2 (i.e is a linear
function). Then, the formula in terms of theta functions is given by:
\[u_{0}=\frac{\lambda\T_{8}^{2}\T_{8}^{2}(z)}{(\lambda-1)\T_{5}^{2}\T_{12}^{2}(z)-\lambda\T_{8}^{2}\T_{14}^{2}(z)}.\]
Then, the Mumford representation of a divisor $D$ in the Jacobian is given $\langle x+u_{0},\pm\sqrt{f(-u_{0)}}\rangle$.

\section{Decomposable Jacobians}


Let $C$ be a genus 2 curve defined over an algebraically closed field $k$, of characteristic zero. Let $\psi: C
\to E$ be a degree $n$ maximal covering (i.e. does not factor through an isogeny) to an elliptic curve $E$ defined
over $k$. We say that $C$ has a \emph{degree n elliptic subcover}. Degree $n$ elliptic subcovers occur in pairs.
Let $(E, E')$ be such a pair. It is well known that there is an isogeny of degree $n^2$ between the Jacobian $J_C$
of $C$ and the product $E \times E'$. We say that $C$ has \textbf{(n,n)-split Jacobian}.

Curves of genus 2 with elliptic subcovers go back to Legendre and Jacobi. Legendre, in his \emph{Th\'eorie
des fonctions elliptiques}, gave the first example of a genus 2 curve with degree 2 elliptic subcovers. In a
review of Legendre's work, Jacobi (1832) gives a complete description for $n=2$. The case $n=3$ was studied
during the 19th century from Hermite, Goursat, Burkhardt, Brioschi, and Bolza. For a history and background
of the 19th century work see Krazer \cite[pg. 479]{Kr}. Cases when $n> 3$ are more difficult to handle. Recently,
Shaska dealt with cases $n=5, 7$ in \cite{deg5}.

The locus of $C$, denoted by $\L_n$, is an algebraic subvariety of the moduli space $\M_2$. The space $\L_2$ was studied
in Shaska/V\"olklein \cite{SV1}. The space $\L_n$ for $n=3, 5 $ was studied by Shaska in
\cite{deg3, deg5} were an algebraic description was given as sublocus of $\M_2$.

\subsection{Curves of genus 2 with split Jacobians}

Let $C$ and $E$ be curves of genus 2 and 1, respectively. Both are smooth, projective curves defined over $k$,
$char(k)=0$. Let $\psi: C \longrightarrow E$ be a covering of degree $n$. From the Riemann-Hurwitz formula,
$\sum_{P \in C}\, (e_{\psi}\,(P) -1)=2$ where $e_{\psi}(P)$ is the ramification index of points $P \in C$, under
$\psi$. Thus, we have two points of ramification index 2 or one point of ramification index 3. The two points of
ramification index 2 can be in the same fiber or in different fibers. Therefore, we have the following cases of
the covering $\psi$:\\

\textbf{Case I:} There are $P_1$, $P_2 \in C$, such that $e_{\psi}({P_1})=e_{\psi}({P_2})=2$, $\psi(P_1) \neq
\psi(P_2)$, and     $\forall  P \in C\setminus \{P_1,P_2\}$,  $e_{\psi}(P)=1$.

\textbf{Case II:} There are $P_1$, $P_2 \in C$, such that $e_{\psi}({P_1})=e_{\psi}({P_2})=2$, $\psi(P_1) =
\psi(P_2)$, and     $\forall  P \in C\setminus \{P_1,P_2\}$,  $e_{\psi}(P)=1$.

\textbf{Case III:} There is $P_1 \in C$ such that $e_{\psi}(P_1)=3$, and $ \forall P \in C \setminus \{P_1\}$,
$e_{\psi}(P)=1$.\\

\noindent In case I (resp. II, III) the cover $\psi$ has 2 (resp. 1) branch points in E.

Denote the hyperelliptic involution of $C$ by $w$. We choose $\mathcal O$ in E such that $w$ restricted to
$E$ is the hyperelliptic involution on $E$. We denote the restriction of $w$ on $E$ by $v$, $v(P)=-P$. Thus,
$\psi \circ w=v \circ \psi$. E[2] denotes the group of 2-torsion points of the elliptic curve E, which are
the points fixed by $v$. The proof of the following two lemmas is straightforward and will be omitted.

\begin{lem} \label{lem_1}
a) If $Q \in E$, then $\forall P \in \psi^{-1}(Q)$, $w(P) \in \psi^{-1}(-Q)$.

b) For all $P\in C$, $e_\psi(P)=e_\psi\,({w(P)})$.
\end{lem}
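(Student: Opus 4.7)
The plan is to derive both parts directly from the intertwining relation $\psi \circ w = v \circ \psi$ established just before the lemma, together with the fact that $w$ and $v$ are biregular involutions of $C$ and $E$ respectively. Neither part needs any input beyond this equivariance and standard properties of morphisms of smooth curves.

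For part (a), I would simply chase definitions: if $P \in \psi^{-1}(Q)$, then $\psi(P) = Q$, and one computes
\[
\psi\bigl(w(P)\bigr) \;=\; v\bigl(\psi(P)\bigr) \;=\; v(Q) \;=\; -Q,
\]
which places $w(P)$ in $\psi^{-1}(-Q)$. This is essentially a one-line verification requiring nothing beyond the compatibility of $\psi$ with the two involutions.

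For part (b), my approach is to invoke the standard multiplicativity of ramification indices under composition: for morphisms of smooth curves $f, g$ one has $e_{f \circ g}(P) = e_f(g(P)) \cdot e_g(P)$. Applying this to both sides of $\psi \circ w = v \circ \psi$ at an arbitrary point $P \in C$ yields
\[
e_\psi\bigl(w(P)\bigr)\cdot e_w(P) \;=\; e_v\bigl(\psi(P)\bigr)\cdot e_\psi(P).
\]
Because $w$ and $v$ are automorphisms of smooth curves, they are unramified at every point, so $e_w(P) = 1$ and $e_v(\psi(P)) = 1$. Canceling these factors gives the desired equality $e_\psi(w(P)) = e_\psi(P)$.

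I do not anticipate any genuine obstacle; the lemma is essentially a bookkeeping statement about the $\langle w \rangle$-equivariance of $\psi$. The only non-trivial input is the multiplicativity of ramification indices under composition, which is standard for finite separable morphisms of smooth curves in characteristic zero, together with the trivial observation that an automorphism contributes ramification index $1$ at every point.
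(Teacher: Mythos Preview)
Your proof is correct; both parts follow exactly as you describe from the equivariance relation $\psi \circ w = v \circ \psi$ together with multiplicativity of ramification indices and the fact that automorphisms are unramified. The paper itself omits the proof as ``straightforward'', and your argument is precisely the intended one.
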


Let $W$ be the set of points in C fixed by $w$. Every curve of genus 2 is given, up to isomorphism, by a binary
sextic, so there are 6 points fixed by the hyperelliptic involution $w$, namely the Weierstrass points of $C$. The
following lemma determines the distribution of the Weierstrass points in fibers of 2-torsion points.

\begin{lem}\label{lem2} The following hold:
\begin{enumerate}
\item $\psi(W)\subset E[2]$

\item If $n$ is an odd number then

 i) $\psi(W)=E[2]$

 ii) If $ Q \in E[2]$ then \#$(\psi^{-1}(Q) \cap W)=1 \mod (2)$

\item If $n$ is an even number then for all $Q\in E[2]$, \#$(\psi^{-1}(Q) \cap W)=0 \mod (2)$
\end{enumerate}
\end{lem}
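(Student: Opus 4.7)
The plan is to prove the three parts in order, with a single local computation at Weierstrass points providing the key parity input for (2) and (3). For (1), the relation $\psi\circ w = v\circ \psi$ gives, for any $P \in W$, the equality $\psi(P) = \psi(w(P)) = v(\psi(P)) = -\psi(P)$, so $2\psi(P) = \mathcal O$ and therefore $\psi(P) \in E[2]$.

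For (2) and (3), I would fix $Q \in E[2]$ and analyze the action of $w$ on the fiber $\psi^{-1}(Q)$. By Lemma~\ref{lem_1}, $w$ permutes $\psi^{-1}(Q) = \psi^{-1}(-Q)$ and preserves ramification indices. Its fixed set on this fiber is exactly $W_Q := W \cap \psi^{-1}(Q)$, while the remaining points are partitioned into $w$-orbits of size $2$, each contributing an even total to $\sum_P e_\psi(P)$. Writing $n = \sum_{P \in \psi^{-1}(Q)} e_\psi(P)$ then yields
$$n \equiv \sum_{P \in W_Q} e_\psi(P) \pmod{2}.$$

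The hard part, and the crux of the argument, is showing that $e_\psi(P)$ is odd for every $P \in W$. To do this I would pass to local coordinates: at a Weierstrass point $P$ take $y$ as a uniformizer (available because the hyperelliptic equation $y^2 = f(x)$ has $f$ with a simple zero at $x(P)$), so that $w$ acts as $y \mapsto -y$; at $Q = \psi(P) \in E[2]$ choose a uniformizer $t$ on which $v$ acts as $t \mapsto -t$ (possible since the differential of negation is $-1$ at a $2$-torsion point). The intertwining $\psi \circ w = v \circ \psi$ forces $\psi^{*}t$, expanded as a power series in $y$, to satisfy $\psi^{*}t(-y) = -\psi^{*}t(y)$, so only odd powers of $y$ occur. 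Hence $e_\psi(P) = \mathrm{ord}_P(\psi^{*}t)$ is odd. Characteristic zero enters precisely here, through the clean separation into even and odd parts.

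With odd ramification at Weierstrass points established, the congruence above becomes $n \equiv \#W_Q \pmod 2$. For $n$ even this is exactly (3). For $n$ odd it gives $\#W_Q \equiv 1 \pmod 2$, so $W_Q$ is nonempty and in particular $Q \in \psi(W)$; since $Q$ was arbitrary in $E[2]$, combining with (1) yields $\psi(W) = E[2]$, which completes (2).
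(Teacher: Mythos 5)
Your proof is correct, and it is complete. The paper itself omits the argument (it declares the proofs of Lemma~\ref{lem_1} and Lemma~\ref{lem2} ``straightforward'' and skips them), so there is nothing to compare line by line; what you wrote is the standard argument one finds in the source paper \cite{Sh7}. Part (1) via $\psi\circ w = v\circ\psi$ is exactly right; the orbit count $n\equiv \sum_{P\in W_Q}e_\psi(P)\pmod 2$ is right, using Lemma~\ref{lem_1}(b) to pair off the non-fixed points; and you correctly identified that the only real content is the oddness of $e_\psi(P)$ at Weierstrass points lying over $2$-torsion, which your local anti-invariance computation delivers. The one spot worth a half-sentence more: a uniformizer $t$ at $Q\in E[2]$ with $v^*t=-t$ \emph{exactly} (not merely to first order) is needed for the parity of the power series; this is immediate either by averaging, $t\mapsto \tfrac12(t-v^*t)$, which works in characteristic zero, or by taking $t=y$ (resp.\ $x/y$) at a finite (resp.\ infinite) $2$-torsion point of a Weierstrass model. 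Note also that your argument does not use the Riemann--Hurwitz constraint $\sum(e_\psi(P)-1)=2$ at all, so it proves the parity statements for arbitrary covers intertwining the two involutions, which is slightly more general than the setting of the section.
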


Let $\pi_C: C \lar \bP^1$ and $\pi_E:E \lar \bP^1$ be the natural degree 2 projections. The hyperelliptic
involution permutes the points in the fibers of $\pi_C$ and $\pi_E$. The ramified points of $\pi_C$, $\pi_E$
are respectively points in $W$ and $E[2]$ and their ramification index is 2. There is $\phi:\bP^1 \lar \bP^1$
such that the diagram commutes.
\begin{equation}
\begin{matrix}
C & \buildrel{\pi_C}\over\lar & \bP^1\\
\psi \downarrow &  & \downarrow \phi \\
E & \buildrel{\pi_E}\over\lar & \bP^1
\end{matrix}
\end{equation}
Next, we will determine the ramification of induced coverings $\phi:\bP^1 \lar \bP^1$. First we fix some
notation. For a given branch point we will denote the ramification of points in its fiber as follows. Any
point $P$ of ramification index $m$ is denoted by $(m)$. If there are $k$ such points then we write $(m)^k$.
We omit writing symbols for unramified points, in other words $(1)^k$ will not be written. Ramification data
between two branch points will be separated by commas. We denote by $\pi_E (E[2])=\{q_1, \dots , q_4\}$ and
$\pi_C(W)=\{w_1, \dots ,w_6\}$.

\subsection{Maximal coverings $\psi:C \lar E$.}

Let $\psi_1:C \lar E_1$ be a covering of degree $n$ from a curve of genus 2 to an elliptic curve. The
covering $\psi_1:C \lar E_1$ is called a \textbf{maximal covering} if it does not factor through a nontrivial
isogeny. A map of algebraic curves $f: X \to Y$ induces maps between their Jacobians $f^*: J_Y \to J_X$ and
$f_*: J_X \to J_Y$. When $f$ is maximal then $f^*$ is injective and $ker (f_*)$ is connected, see \cite{Sh7}
for details.

Let $\psi_1:C \lar E_1$ be a covering as above which is maximal. Then ${\psi^*}_1: E_1 \to J_C$ is injective
and the kernel of $\psi_{1,*}: J_C \to E_1$ is an elliptic curve which we denote by $E_2$. For a fixed Weierstrass point $P \in C$, we can embed $C$ to its Jacobian via
\begin{equation}
\begin{split}
i_P: C & \lar J_C \\
 x & \to [(x)-(P)]
\end{split}
\end{equation}
Let $g: E_2 \to J_C$ be the natural embedding of $E_2$ in $J_C$, then there exists $g_*: J_C \to E_2$. Define
$\psi_2=g_*\circ i_P: C \to E_2$. So we have the following exact sequence
$$ 0 \to E_2 \buildrel{g}\over\lar J_C \buildrel{\psi_{1,*}}\over\lar E_1 \to 0 $$
The dual sequence is also exact
$$ 0 \to E_1 \buildrel{\psi_1^*}\over\lar J_C \buildrel{g_*}\over\lar E_2 \to 0 $$
If $deg (\psi_1)$ is an odd number then the maximal covering $\psi_2: C \to E_2$ is unique. If the cover $\psi_1:C \lar E_1$ is given, and therefore $\phi_1$,
we want to determine $\psi_2:C \lar E_2$ and $\phi_2$. The study of the relation between the ramification
structures of $\phi_1$ and $\phi_2$ provides information in this direction. The following lemma (see
 answers this question for the set of Weierstrass points $W=\{P_1, \dots , P_6\}$ of C
when the degree of the cover is odd.

\begin{lem} Let $\psi_1:C \lar E_1$, be maximal of  degree $n$.
Then, the map $\psi_2: C\to E_2$ is a maximal covering of degree $n$. Moreover,
\begin{enumerate}
\item [i) ] if  $n$ is  odd and ${\cO}_i\in E_i[2]$, $i=1, 2$  are  the places such that $\#
(\psi_i^{-1}({\cO }_i)\cap W) = 3$, then $\psi_1^{-1}({\cO }_1)\cap W$ and $\psi_2^{-1}({\cO }_2)\cap W$ form
a disjoint union of $W$.
\item [ii)] if $n$ is even and $Q\in E[2]$, then $\# \left( \psi^{-1}(Q)\right) = 0$ or 2.
\end{enumerate}
\end{lem}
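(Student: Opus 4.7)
The plan is to first establish that $\psi_2$ has degree $n$ and is maximal, then treat the numbered assertions by combining the parity constraints of Lemma 2 with the structure of the isogeny $J_C \to E_1 \times E_2$.

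For the initial claim, since $\psi_1$ is maximal, $E_2 := \ker(\psi_{1,*})^0$ is connected, hence an elliptic curve, and $\psi_2 = g_* \circ i_P$ as defined. The addition map $\mu : E_1 \times E_2 \to J_C$, $(x,y) \mapsto \psi_1^*(x) + g(y)$, is an isogeny whose dual $\mu^{\vee} : J_C \to E_1 \times E_2$ sends $D \mapsto (\psi_{1,*}(D), g_*(D))$. The composition $\mu^{\vee} \circ \mu$ acts on the $i$-th factor as multiplication by $\deg \psi_i$ up to an integer determined by the principal polarization, and since the total degree of this composition is fixed, $\deg \psi_1 = n$ forces $\deg \psi_2 = n$. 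Maximality of $\psi_2$ is immediate from $g(E_2)$ being the connected identity component of $\ker(\psi_{1,*})$, so it cannot factor through a nontrivial isogeny.

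For part (i) with $n$ odd, Lemma 2 part 2 says $\psi_i(W) = E_i[2]$ and each fiber of $\psi_i|_W$ has odd cardinality. Since $|W| = 6$ and $|E_i[2]| = 4$, the only partition of $6$ into four positive odd integers is $3+1+1+1$, uniquely singling out $\mathcal{O}_i \in E_i[2]$ with three Weierstrass preimages. Writing $W_i := \psi_i^{-1}(\mathcal{O}_i) \cap W$, by cardinality proving $W = W_1 \sqcup W_2$ reduces to the disjointness $W_1 \cap W_2 = \emptyset$. I would fix a Weierstrass base point $P_0$, use the Abel--Jacobi embedding $i_{P_0}$ to place $i_{P_0}(W) \subset J_C[2]$, and track each class $[(P) - (P_0)]$ under the projections $\psi_{1,*}$ and $\psi_{2,*}$. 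Membership of $P$ in $W_i$ then translates into the $i$-th component of this projection being a prescribed $2$-torsion element of $E_i$, and disjointness reduces to showing that the two subtori $\psi_1^*(E_1)$ and $g(E_2)$ are transverse at the Weierstrass $2$-torsion classes.

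For part (ii) with $n$ even, Lemma 2 part 3 forces each fiber of $\psi|_W$ to have even cardinality, so the admissible distributions of the $6$ Weierstrass points over $E[2]$ are $(6,0,0,0)$, $(4,2,0,0)$, and $(2,2,2,0)$. To rule out the first two I would combine the ramification classification (cases I, II, III above) with the constraint that $\psi(W)$ cannot be concentrated on a single coset of a proper subgroup of $E[2]$ without forcing $\psi$ to factor through a nontrivial isogeny, violating maximality. The main obstacle is the transversality step in (i), which requires careful Weil-pairing bookkeeping with respect to the complementary elliptic subtori of $J_C$ together with a coherent normalization of $\mathcal{O}_1$ and $\mathcal{O}_2$ relative to the chosen base point; I expect the corresponding exclusions in (ii) to fall out of the same framework once the relationship between the Weierstrass classes in $J_C[2]$ and the two projections is pinned down.
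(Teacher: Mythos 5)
Your framework is the right one --- the complementary elliptic curve, the isogeny $E_1\times E_2\to J_C$, the parity constraints from the earlier lemma on Weierstrass fibers, and the reduction of (i) to a disjointness statement are all correct --- but the proposal stops exactly where the content of the lemma begins. Both the disjointness in (i) and the exclusion of the fiber distributions $(6,0,0,0)$ and $(4,2,0,0)$ in (ii) are left as acknowledged obstacles, and the heuristics you offer for them (``transversality at the Weierstrass $2$-torsion classes''; ``$\psi(W)$ concentrated on a coset forces a factorization through an isogeny'') do not, as stated, close either step: the first is not yet an argument, and the second is not justified and does not obviously apply to the $(4,2,0,0)$ case. (The paper itself gives no proof here --- the lemma is quoted from the reference on $(n,n)$-decomposable Jacobians --- so the proof genuinely has to be supplied.) Also, the claim $\deg\psi_2=n$ is better attributed to the standard fact that complementary abelian subvarieties of a principally polarized abelian variety have equal exponents than to the vague ``total degree of the composition is fixed.''

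Here is what actually closes the gaps, using exactly the Weil-pairing bookkeeping you anticipate. Index the Weierstrass points by $\{1,\dots,6\}$ and for a two-element subset $S$ let $e_S=[\sum_{i\in S}P_i-K_C]\in J_C[2]$; these classes are pairwise distinct and nonzero, and the Weil pairing of the principal polarization satisfies $\langle e_S,e_T\rangle=(-1)^{|S\cap T|}$. Since $\psi_1$ is maximal, $\ker(\psi_{1,*})=g(E_2)$ is connected, so $\ker(\psi_{1,*})\cap J_C[2]=g(E_2[2])$ has exactly four elements, and dually $\ker(\psi_{2,*})\cap J_C[2]=\psi_1^*(E_1[2])$; if two Weierstrass points $P_i\ne P_j$ lie in the same fiber of $\psi_1|_W$ (resp.\ $\psi_2|_W$) then $e_{\{i,j\}}$ lies in $g(E_2[2])$ (resp.\ $\psi_1^*(E_1[2])$). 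For (i), oddness of $n$ makes the isogeny injective on $2$-torsion, so $J_C[2]=\psi_1^*(E_1[2])\oplus g(E_2[2])$ and the two summands are mutual orthogonal complements for the Weil pairing (this is the compatibility $\langle\psi_1^*u,\,gv\rangle=\langle u,\,\psi_{1,*}gv\rangle=1$). If some $P_i$ lay in both three-element fibers, choose $P_j\ne P_i$ in the $\psi_1$-fiber and $P_k\ne P_i$ in the $\psi_2$-fiber; then $e_{\{i,j\}}\in g(E_2[2])$ and $e_{\{i,k\}}\in\psi_1^*(E_1[2])$ would pair trivially, contradicting $|\{i,j\}\cap\{i,k\}|=1$. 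For (ii), a fiber of $\psi|_W$ containing $k$ points yields $k-1$ distinct nonzero classes $e_{\{i_1,i_j\}}$ inside $g(E_2[2])$, whose nonzero part has only three elements; this kills $k=6$, and it kills $(4,2,0,0)$ because the four-point fiber already exhausts $g(E_2[2])\smallsetminus\{0\}$ while the two-point fiber contributes a further class $e_{\{a,b\}}$ with $\{a,b\}$ disjoint from (and not complementary to) the pairs from the four-point fiber, hence distinct from all of them. This leaves only the distribution $(2,2,2,0)$, which is assertion (ii).
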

The above lemma says that if $\psi$ is maximal of even degree then the corresponding induced covering can
have only type \textbf{I} ramification.

\subsection{The locus of genus two curves with $(n, n)$ split Jacobians}


Two covers $f:X\to \bP^1$ and $f':X'\to \bP^1$ are called \textbf{weakly equivalent} if there is a
homeomorphism $h:X\to X'$ and an analytic automorphism $g$ of $\bP^1$ (i.e., a Moebius transformation) such
that  $g\circ f=f'\circ h$. The covers $f$ and $f^\prime$ are called \textbf{equivalent } if the above holds
with $g=1$.

Consider a cover $f:X \to \bP^1$ of degree $n$, with branch points $p_1,...,p_r\in \bP^1$. Pick $p\in \bP^1
\setminus\{p_1,...,p_r\}$, and choose loops $\gamma_i$ around $p_i$ such that $\gamma_1,...,\gamma_r$ is a
standard generating system of the fundamental group $\Gamma:=\pi_1( \bP^1 \setminus\{p_1,...,p_r\},p)$, in
particular, we have $\gamma_1 \cdots \gamma_r=1$. Such a system $\gamma_1,...,\gamma_r$ is called a homotopy basis
of $\bP^1 \setminus\{p_1,...,p_r\}$. The group $\Gamma$ acts on the fiber $f^{-1}(p)$ by path lifting, inducing a
transitive subgroup $G$ of the symmetric group $S_n$ (determined by $f$ up to conjugacy in $S_n$). It is called
the \textbf{monodromy group} of $f$. The images of $\gamma_1,...,\gamma_r$ in $S_n$ form a tuple of permutations
$\s=(\s_1,...,\s_r)$ called a tuple of \textbf{branch cycles} of $f$.

We say a cover $f:X\to\bP^1$ of degree $n$ is of type $\s$ if it has $\s$ as tuple of branch cycles relative to
some homotopy basis of $\bP^1$ minus the branch points of $f$. Let $\Hs$ be the set of weak equivalence classes of
covers of type $\s$.  The \textbf{Hurwitz space} $\Hs$ carries a natural structure of an quasiprojective variety.

We have $\Hs=\H_\tau$ if and only if the tuples $\s$, $\tau$ are in the same \textbf{braid orbit} $\mathcal
O_\tau = \mathcal O_\sigma$. In the case of the covers $\phi : \bP^1 \to \bP^1$ from above, the corresponding
braid orbit consists of all tuples in $S_n$ whose cycle type matches the ramification structure of $\phi$.

\subsubsection{Humbert surfaces}
Let $\A_2$ denote the moduli space of principally polarized Abelian surfaces. It is well known that $\A_2$ is
the quotient of the Siegel upper half space $\H_2$ of symmetric complex $2 \times 2$ matrices with
positive definite  imaginary part by the action of the symplectic group $Sp_4 (\Z)$; see \cite[p. 211]{G}.

Let $\D$ be a fixed positive integer and  $N_\D$ be the set of matrices
$$\tau =
\begin{pmatrix}z_1 & z_2\\
z_2 & z_3
\end{pmatrix}
\in \mathfrak H_2$$ such that there exist nonzero integers $a, b, c, d, e $ with the following properties:
\begin{equation}\label{humb}
\begin{split}
& a z_1 + bz_2 + c z_3 + d( z_2^2 - z_1 z_3) + e = 0\\
& \D= b^2 - 4ac - 4de\\
\end{split}
\end{equation}

The \emph{Humbert surface}  $\H_\D$  of discriminant $\D$   is called the image of $N_\D$ under the
canonical map
$$H_2 \to \A_2:= Sp_4( \Z)\setminus{ H_2},$$
see \cite{Hu, BW, Mu} for details.  It is known that $\H_\D \neq \emptyset$ if and only if $\D
> 0$ and $\Delta \equiv 0 \textit { or } 1 \mod 4$. Humbert (1900) studied the zero loci in
Eq.~\eqref{humb} and discovered certain relations between points in these spaces and certain plane
configurations of six lines; see \cite{Hu} for more details.

For a genus 2 curve $C$ defined over $\bC$, $[C]$ belongs to $\L_n$ if and only if the isomorphism class $[J_C]
\in \A_2$ of its (principally polarized) Jacobian $J_C$ belongs to the Humbert surface $\H_{n^2}$, viewed as a
subset of the moduli space $\A_2$ of principally polarized Abelian surfaces; see \cite[Theorem 1, p. 125]{Mu}  for
the proof of this statement. In \cite{Mu} is shown that there is a one to one correspondence between the points in
$\L_n$ and points in $\H_{n^2}$. Thus, we have the map:
\begin{equation}
\begin{split}
&  \H_\s \, \,  \longrightarrow \, \, \L_n  \, \, \longrightarrow  \, \, \H_{n^2}\\
([f], (p_1, & \dots , p_r)  \to [\X] \to [J_\X]\\
\end{split}
\end{equation}
In particular, every point in $\H_{n^2}$ can be represented by an element of $\mathfrak H_2$ of the form
$$\tau =
\begin{pmatrix}z_1 & \frac 1 n \\
\frac 1 n & z_2
\end{pmatrix}, \qquad z_1, \, z_2 \in \mathfrak H.
$$
There have been many attempts to explicitly describe these Humbert surfaces. For some small discriminant this
has been done in \cite{SV1}, \cite{deg3}, \cite{deg5}. Geometric characterizations of such
spaces for $\D= 4, 8, 9$, and 12 were given by Humbert (1900) in \cite{Hu} and for $\D= 13, 16, 17, 20$, 21
by Birkenhake/Wilhelm.

\subsection{Genus 2 curves with degree 3 elliptic subcovers}

This case was studied in detail in\cite{deg3}. The main theorem was:

\begin{thm}\label{main_thm_deg3}
Let $K$ be a genus 2 field and $e_3(K)$ the number of $Aut(K/k)$-classes
 of elliptic subfields of $K$ of degree 3.  Then;

i)  $e_3(K) =0, 1, 2$, or  $4$

ii)    $e_3(K) \geq 1$ if and only if
 the classical invariants of $K$ satisfy  the irreducible
equation   $F(J_2, J_4, J_6, J_{10})=0$ displayed  in  \cite[Appendix A]{deg3}.
\end{thm}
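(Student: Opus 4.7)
The plan is to build an explicit rational parametrization of the locus $\L_3$ of genus two curves with degree 3 elliptic subcovers, compute the Igusa invariants along this parametrization, and then eliminate the parameters to obtain the equation $F(J_2,J_4,J_6,J_{10})=0$. Counting preimages of the parametrizing map will give the possible values of $e_3(K)$.

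First I would analyze the ramification of a maximal cover $\psi:C\to E$ of degree $3$. By Riemann--Hurwitz one falls into Cases I, II, or III of the previous subsection. Composing with the hyperelliptic quotients gives a commuting square in which $\phi:\bP^1\to\bP^1$ has degree $3$. A case-by-case Hurwitz-type analysis of the ramification divisor of $\phi$ (using Lemma \ref{lem_1} and Lemma \ref{lem2} together with $n=3$ odd) determines the precise cycle-types of a tuple of branch cycles $\s=(\s_1,\dots,\s_r)$ in $S_3$. The associated Hurwitz space $\Hs$ (a quasi-projective variety by the discussion preceding Humbert surfaces) is $2$-dimensional, matching $\dim \L_3=2$. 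I would normalize $E$ in Legendre form and use a Möbius change of coordinate on the target $\bP^1$ of $\phi$ to kill three branch-point coordinates, leaving two essential parameters $(r,s)$ on $\Hs$.

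Next, I would write down $C$ explicitly as the fiber product corresponding to the diagram in the previous subsection: the hyperelliptic cover $\pi_C:C\to\bP^1$ is determined by $\phi$ and $\pi_E$, which gives a sextic model $y^2=f(x;r,s)$ with coefficients polynomial in the parameters. Applying Igusa's formulas yields $J_{2i}=J_{2i}(r,s)$ for $i=1,2,3,5$, each a homogeneous polynomial of degree $2i$ in the coefficients of $f$. I would then eliminate $r$ and $s$ from the system $\{J_{2i}-J_{2i}(r,s) : i=1,2,3,5\}$ using successive resultants or a Gr\"obner basis computation; the single relation that survives (after extracting the irreducible factor on which the parametrization is dominant) is the desired $F(J_2,J_4,J_6,J_{10})=0$. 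Irreducibility of $F$ follows because the Hurwitz space in question is irreducible: the braid group acts transitively on the tuples of branch cycles of the given shape in $S_3$.

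For part (i), I would translate the counting of $\Aut(K/k)$-classes of degree 3 elliptic subfields into the counting of preimages of $[K]\in\M_2$ under the forgetful map $\Hs\to \L_3$. The degree of this map, computed as the number of distinct maximal degree-$3$ covers realizing a given $J$-tuple, takes the value $4$ generically, drops to $2$ on the codimension-one locus where two of these covers coincide up to isomorphism (detected by the modular polynomial $\Phi_3$ vanishing at the corresponding pair of $j$-invariants), and drops further to $1$ on the exceptional sublocus where the stabilizer in $\Aut(C)$ exchanges the remaining covers. The value $0$ occurs precisely off $\L_3$. Matching these strata against the fibers of $A:(r,s)\to (i_1,i_2,i_3)$ yields $e_3(K)\in\{0,1,2,4\}$.

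The main obstacle is the elimination in the second paragraph: the $J_{2i}(r,s)$ are very large polynomials, so extracting the irreducible factor defining $\L_3$ and proving that no extraneous component appears (rather than just being a by-product of the particular projection) requires a careful interpretation of the resultant, ideally backed by a Bezout-style dimension count against $\H_{9}$ via the inclusion $\L_3\subset\H_{9}$ from the Humbert surface discussion. The counting of $e_3(K)$ depends on making the braid-orbit analysis on $S_3$-tuples compatible with the $\Aut(K/k)$-action, which is the delicate combinatorial step.
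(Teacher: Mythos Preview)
Your overall strategy for part (ii)---parametrize $\L_3$ by two parameters, compute the $J_{2i}$ along the parametrization, then eliminate---is essentially what the paper does. The paper's concrete incarnation is Lemma~\ref{lemma3_1}: the normal form $Y^2=(4X^3+b^2X^2+2bX+1)(X^3+aX^2+bX+1)$ together with the invariants $u=ab$, $v=b^3$ plays exactly the role of your $(r,s)$, and the elimination is carried out from there. So that half of your proposal is on the right track.

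Your analysis of part (i), however, has the stratification upside down. You assert that the forgetful map has generic degree $4$, dropping to $2$ in codimension one and to $1$ on an exceptional sublocus. In fact the generic value of $e_3(K)$ on $\L_3$ is $2$, not $4$. A maximal degree-$3$ cover $\psi_1:C\to E_1$ determines a unique companion $\psi_2:C\to E_2$ via the kernel of $\psi_{1,*}$ on $J_C$ (see the ``Maximal coverings'' subsection), and for a generic point of $\L_3$ this single pair $\{E_1,E_2\}$ exhausts all degree-$3$ elliptic subfields; the involution $\nu$ on $k(u,v)$ swapping $j_1\leftrightarrow j_2$ is precisely the deck transformation of the generically $2$-to-$1$ map $(u,v)\to[\mathcal C]$. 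The value $e_3(K)=4$ occurs only when $C$ carries \emph{two distinct pairs} of such covers---that is, at self-intersection points of $\L_3$ inside $\M_2$---and there are exactly two such curves, a $0$-dimensional locus. The value $e_3(K)=1$ occurs along a $1$-dimensional sublocus where $E_1$ and $E_2$ become $\Aut(K/k)$-equivalent; this is governed by the Jacobian criterion $\det(\mathrm{Jac}\,\theta)=0$ in the paper, not by $\Phi_3(j_1,j_2)=0$ as you suggest (the latter would say $E_1$ and $E_2$ are $3$-isogenous, which is a different condition entirely). So your fiber-counting argument, as written, does not prove (i); you need to redo it starting from the correct generic degree $2$ and identifying the special fibers accordingly.
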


There are exactly two genus 2 curves (up to isomorphism) with $e_3(K)=4$. The case $e_3(K)=1$ (resp., 2) occurs
for a 1-dimensional (resp., 2-dimensional)  family of genus 2 curves, see \cite{deg3}.

\begin{figure}[htbp]
\begin{center}
\includegraphics[width=8cm]{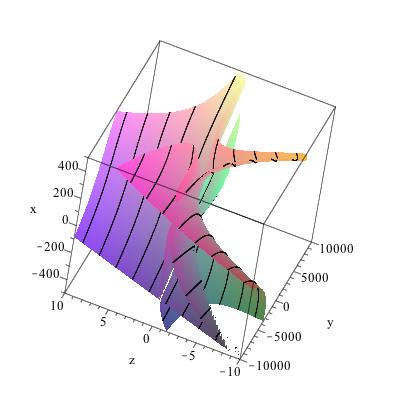}
\caption{Shaska's surface as graphed in \cite{Be1}}
\label{default}
\end{center}
\end{figure}

A geometrical interpretation of the Shaska's surface (the space $\L_3$) and its singular locus can be found in \cite{Be1}.

\begin{lem}\label{lemma3_1}
Let $K$ be a genus 2 field and $E$ an elliptic subfield of degree 3.

i) Then $K=k(X,Y)$ such that
\begin{equation}\label{n_3_form1}
Y^2= (4X^3+b^2X^2+2bX+1) (X^3+aX^2+b X+1)
\end{equation}
for $a, b \in k$ such that
\begin{equation}\label{n_3_form2}
\begin{split}
(4a^3+27-18ab-a^2b^2+4b^3)(b^3-27) \neq 0
\end{split}
\end{equation}
The roots of the first (resp. second) cubic correspond to $W^{(1)}(K,E)$, (resp. $W^{(2)}(K,E)$) in the
coordinates $X,Y$, (see theorem \ref{thm1}).

ii) $E=k(U,V)$ where $$U= \frac {X^2} {X^3+aX^2+b X+1}$$ and
\begin{equation}\label{eq_E1}
V^2= U^3 +2 \frac {ab^2-6a^2+9b} {R} U^2 + \frac {12a-b^2 } {R} U - \frac 4 {R}
\end{equation}
where $R=4a^3+27-18ab-a^2b^2+4b^3 \neq 0$.

iii) Define
$$ u:=ab, \quad v:=b^3 $$
Let $K^\prime$ be a genus 2 field and $E^\prime \subset K^\prime$ a degree 3 elliptic subfield. Let
$a^\prime, b^\prime$ be the associated parameters as above and $u^\prime:=a^\prime b^\prime$,
$v=(b^\prime)^3$. Then, there is a $k$-isomorphism $K \to K^\prime$ mapping $E\to E^\prime$ if and only if
exists a third root of unity $\e \in k$ with $a^\prime=\e a$ and $b^\prime=\e^2 b$. If $b\neq 0$ then such
$\e$ exists if and only if $v=v^\prime$ and $u=u^\prime$.

iv) The classical invariants of $K$ satisfy equation \cite[Appendix A]{deg3}.
\end{lem}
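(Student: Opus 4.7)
The plan is to use the distribution of Weierstrass points for odd $n=3$ from the preceding lemma: the six Weierstrass points of $C$ partition over $E[2]$ with pattern $3{+}1{+}1{+}1$. Let $\mathcal O_1\in E[2]$ be the distinguished 2-torsion point whose fibre contains three Weierstrass points; call this triple $W^{(1)}(K,E)$ and the remaining three $W^{(2)}(K,E)$. I would fix an affine coordinate $X$ on $\mathbb P^1=C/\langle w\rangle$ so that $\pi_E(\mathcal O_1)=\infty$ and so that the three $X$-coordinates of $W^{(1)}(K,E)$ are the zeros of a monic cubic $g_1(X)=X^3+aX^2+bX+1$; the residual freedom in this choice is the cyclic subgroup $X\mapsto\epsilon X$, $\epsilon^3=1$, of $\operatorname{PGL}_2$, which acts on parameters by $(a,b)\mapsto(\epsilon a,\epsilon^2 b)$.

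For (i), consider the induced degree-three map $\phi:\mathbb P^1\to\mathbb P^1$ arising from $\pi_E\circ\psi=\phi\circ\pi_C$. Its fibre over $\infty$ consists of the roots of $g_1$, so $\phi(X)=P(X)/g_1(X)$ with $\deg P\le 3$. Riemann--Hurwitz gives total ramification $4$ for $\phi$, and the chain-rule identity $e_{\pi_C}(P)\,e_\phi(\pi_C(P))=e_\psi(P)\,e_{\pi_E}(\psi(P))$ combined with Lemma~\ref{lem_1} forces each fibre of $\phi$ over $q_j=\pi_E(\mathcal O_j)$ ($j=2,3,4$) to consist of one double ramification point $\beta_j$ and one simple unramified preimage $\delta_j$, the latter being the image under $\pi_C$ of the unique Weierstrass point of $W^{(2)}(K,E)$ in that fibre. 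Using the remaining normalisation to place the fourth ramification point of $\phi$ at $X=0$ forces $P(X)=X^2$. Equating $\phi(X)-q_j=(X^2-q_j g_1(X))/g_1(X)$ with $-q_j(X-\beta_j)^2(X-\delta_j)$ yields $\delta_j=-1/\beta_j^2$ and $\beta_j^3-b\beta_j-2=0$; eliminating $\beta_j$ between these produces the relation $4\delta_j^3+b^2\delta_j^2+2b\delta_j+1=0$. Thus the three elements of $W^{(2)}(K,E)$ are the roots of $h(X)=4X^3+b^2X^2+2bX+1$, and $Y^2=h(X)\,g_1(X)$ is the desired equation. The non-vanishing condition $(4a^3+27-18ab-a^2b^2+4b^3)(b^3-27)\ne 0$ packages $\operatorname{disc}(g_1)=R\ne 0$ (three distinct roots of $g_1$) with $\gcd(g_1,h)=1$ (equivalent to $b^3\ne 27$), which together guarantee smoothness of $C$.

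For (ii), by construction $U=X^2/g_1(X)=\phi(X)$ generates $k(E)\subset k(C)$; setting $V=Y/g_1(X)$ and using $X^2=U\cdot g_1(X)$ together with $Y^2=g_1(X)h(X)$ to rewrite $V^2$ purely in terms of $U$ produces the stated Weierstrass cubic, with coefficients naturally organised around $R$. For (iii), the $\mu_3$-action $(a,b)\mapsto(\epsilon a,\epsilon^2 b)$ on the parameter space has invariant ring generated by $u=ab$ and $v=b^3$, so $(u,v)$ parametrises the $\mu_3$-orbits; the case $b=0$ only requires checking that within the orbit $(a,0)\sim(\epsilon a,0)$ the invariants $u,v$ still separate orbits (trivially true since both vanish). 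For (iv), substituting $Y^2=g_1(X)h(X)$ into the classical Igusa--Clebsch formulas for $J_2,J_4,J_6,J_{10}$ expresses each $J_{2k}$ as a rational polynomial in $a,b$, and eliminating $a,b$ (equivalently $u,v$) from the resulting system recovers the single relation $F(J_2,J_4,J_6,J_{10})=0$ of \cite[Appendix A]{deg3}.

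The main obstacle is part (i): justifying the normalisation $P(X)=X^2$ (which simultaneously uses up the residual continuous freedom in $\operatorname{PGL}_2$ and places the ``extra'' ramification point of $\phi$ at the origin), and then carrying through the elimination $\beta_j\mapsto\delta_j$ to recover the explicit form of $h(X)$. Once this normalisation and elimination are in place, parts (ii)--(iv) reduce to direct computation: (ii) is a substitution, (iii) is invariant theory for the finite group $\mu_3$ acting linearly, and (iv) is a symbolic elimination in the Igusa--Clebsch invariants.
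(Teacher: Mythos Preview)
Your approach is essentially the one taken in \cite{deg3} (the present paper states the lemma without proof, deferring to that reference): analyze the induced degree-$3$ map $\phi:\bP^1\to\bP^1$ via the Weierstrass-point distribution $3+1+1+1$, normalize so that the ``triple'' fibre sits over $\infty$ with $g_1$ monic of constant term $1$, place the extra ramification point at $X=0$ to force $\phi(X)=X^2/g_1(X)$, and then read off the remaining three Weierstrass images from the double-point conditions. Your elimination $\beta_j^3-b\beta_j-2=0$, $\delta_j=-1/\beta_j^2 \Rightarrow 4\delta_j^3+b^2\delta_j^2+2b\delta_j+1=0$ is exactly right.

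A few small corrections are worth making. First, your identification of $W^{(1)}$ and $W^{(2)}$ is reversed relative to the lemma's labeling: the roots of $4X^3+b^2X^2+2bX+1$ are declared to be $W^{(1)}(K,E)$, not the roots of $g_1$. Second, your attribution of the non-degeneracy conditions is slightly off. One computes directly that $\operatorname{disc}(g_1)=-R$ and $\operatorname{disc}(h)=16(b^3-27)$, so $b^3\neq 27$ is precisely the condition that $h$ have distinct roots, not the $\gcd$ condition. In fact, a pleasant coincidence here is that $\operatorname{Res}(g_1,h)=-R$ as well (as the paper notes just after the lemma, calling $R$ ``the resultant of $F$ and $G$''), so $R\neq 0$ simultaneously guarantees that $g_1$ has distinct roots and that $g_1,h$ are coprime. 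Third, your remark on the $b=0$ case in part (iii) is not right: when $b=0$ one has $u=v=0$ identically, so $(u,v)$ does \emph{not} separate $\mu_3$-orbits of $(a,0)$ for varying $a$. This is precisely why the lemma adds the hypothesis ``If $b\neq 0$'' before asserting that $(u,v)=(u',v')$ is equivalent to the existence of $\epsilon$. None of these affect the validity of your overall argument.
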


Let
\begin{equation}
\begin{split}
F(X) & :=X^3+a X^2+b X+1\\
G(X) & :=4X^3+b^2X^2+2bX+1
\end{split}
\end{equation}

Denote by $R=4a^3+27-18ab-a^2b^2+4b^3$ the resultant of $F$ and $G$. Then we have the following lemma.

\begin{lem}\label{lemma3_2} Let $a,b \in k$ satisfy equation \eqref{n_3_form2}.
Then equation \eqref{n_3_form1} defines a genus 2 field $K=k(X,Y)$. It has elliptic subfields of degree 3,
$E_i=k(U_i, V_i)$, $i=1,2$, where $U_i$, and $V_i$ are as follows:
$$
U_1 = \frac {X^2} {F(X)}, \quad V_1= Y\, \frac {X^3-bX-2} {F(X)^2}
$$

\begin{small}
\begin{equation}
\begin{split}
U_2 & = \left\{ \aligned
  \frac {(X-s)^2 (X-t)} {G(X) } &\quad if \quad b(b^3-4ba+9) \neq 0 \\
  \frac { (3X- a)} {3(4X^3+1)} & \quad if \quad b= 0\\
  \frac { (bX+3)^2} {b^2G(X)}  &\quad if \quad (b^3-4ba+9) = 0\\
\endaligned
\right.
\end{split}
\end{equation}
\end{small}

where
$$ s=- \frac 3 b, \quad t=\frac {3a-b^2} {b^3-4ab+9}$$

\begin{scriptsize}
\begin{equation}
\begin{split}
V_2 & = \left\{ \aligned
 \frac {\sqrt {27-b^3} Y} {G(X)^2}
((4ab-8-b^3)X^3 -(b^2-4ab)X^2 +bX+1) & \quad if \quad b(b^3-4ba+9) \neq 0\\
 Y \frac {8X^3-4aX^2-1} {(4X^3+1)^2}   & \quad if \quad b= 0\\
\frac 8 b \sqrt{b} \frac Y {G(X)} (bX^3+9X^2+b^2X+b )    & \quad if \quad (b^3-4ba+9) = 0\\
\endaligned
\right.
\end{split}
\end{equation}
\end{scriptsize}
\end{lem}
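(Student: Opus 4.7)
The plan is to prove this by direct verification: the formulas for $U_i$ and $V_i$ are given, so the task reduces to checking that $V_i^2$ equals a cubic polynomial in $U_i$ (producing an elliptic equation) and that the resulting cubic matches the one from Lemma \ref{lemma3_1}(ii). First I would confirm that \eqref{n_3_form1} defines a genus two field: the condition $(4a^3+27-18ab-a^2b^2+4b^3)(b^3-27)\ne 0$ in \eqref{n_3_form2} makes the resultant of $F$ and $G$ nonzero (so $F$ and $G$ share no root) and, combined with the standard discriminant conditions on each cubic separately, guarantees that $F(X)G(X)$ is separable of degree 6.

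For $E_1$ the formula follows almost directly from Lemma \ref{lemma3_1}(ii): we already know $U_1 = X^2/F(X)$ is one coordinate. Using $Y^2 = F(X)G(X)$,
\begin{equation*}
V_1^2 \;=\; \frac{Y^2(X^3-bX-2)^2}{F(X)^4} \;=\; \frac{G(X)(X^3-bX-2)^2}{F(X)^3}.
\end{equation*}
After multiplying by $F(X)^3$, the claim that this equals $U_1^3 + \alpha U_1^2 + \beta U_1 + \gamma$ with $\alpha,\beta,\gamma$ as in \eqref{eq_E1} becomes a polynomial identity of degree six in $X$, verifiable by matching coefficients. The numerator $X^3-bX-2$ is precisely the factor needed so that the pole orders at each root of $F$ cancel correctly, which one sees by a local valuation computation at each root.

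The case of $E_2$ is subtler because the sextic is not symmetric in $F$ and $G$; the polynomial $G$ is presented in a \emph{non-monic} normalization so one cannot simply swap roles. The strategy is to find a Möbius change of coordinate on the $X$-line that converts $G$ into the same normal form used for $F$, then apply the $E_1$ formula and pull back. In the generic case $b(b^3-4ab+9)\ne 0$, this Möbius transformation is precisely the map that sends two roots of $G$ to a common value and moves the third appropriately, which is why the numerator of $U_2$ has a double factor at $s = -3/b$ and a simple factor at $t = (3a-b^2)/(b^3-4ab+9)$; the values of $s$ and $t$ are forced by requiring that the pullback polynomial agrees with $G(X)$ up to a unit. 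The two exceptional cases $b=0$ and $b^3-4ab+9=0$ are the loci where $s$ or $t$ goes to infinity in the generic formula, and one handles each by taking the correct limit: when $b=0$ the covering degenerates so that the double point moves to infinity, yielding $U_2 = (3X-a)/3(4X^3+1)$; when $b^3-4ab+9=0$ the simple point $t$ moves to infinity, leaving $U_2 = (bX+3)^2/(b^2 G(X))$. In each branch, substituting into $Y^2 = F(X)G(X)$ and simplifying verifies that $V_2^2$ as displayed is a rational function of $U_2$ alone, and the resulting cubic is checked to be a valid Weierstrass model.

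The main obstacle is not the verification itself (which is a finite symbolic computation) but identifying the correct Möbius normalization for $E_2$ and detecting the degeneracy loci where different formulas apply. Once the candidate maps are written down, the remaining steps are polynomial identities that can be dispatched by expanding both sides and comparing coefficients, and cross-checking that the j-invariant of the elliptic curve obtained matches the one predicted by Lemma \ref{lemma3_1}(ii) up to the appropriate twist.
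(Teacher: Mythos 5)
The paper itself gives no proof of this lemma; it is quoted from \cite{deg3}, where the verification is exactly the kind of direct symbolic computation you propose, so your route is essentially the intended one. Your outline is sound, but three details deserve tightening. First, the separability of the sextic follows from \eqref{n_3_form2} alone, with no ``standard discriminant conditions on each cubic separately'' needed in addition: the first factor $4a^3+27-18ab-a^2b^2+4b^3$ is simultaneously $-\mathrm{disc}(F)$ and (up to a constant) the resultant of $F$ and $G$, while $\mathrm{disc}(G)=16(b^3-27)$, so the displayed condition is precisely equivalent to $FG$ having six distinct roots. Second, the polynomial identity you reduce the $E_1$ case to, namely $G(X)(X^3-bX-2)^2=c_3X^6+c_2X^4F(X)+c_1X^2F(X)^2+c_0F(X)^3$, is an identity of degree $9$, not $6$; and note that $c_0=4$ (evaluate at $X=0$), so $V_1$ does not satisfy \eqref{eq_E1} on the nose but only after rescaling $V_1$ by $\sqrt{-1/R}$ --- a quadratic twist that is harmless over the algebraically closed field $k$, as you anticipate at the end. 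Third, to conclude that each $E_i$ is an \emph{elliptic} subfield of \emph{degree} $3$ you should also record that the cubic in $U_i$ obtained is squarefree (so $k(U_i,V_i)$ has genus $1$, not $0$) and that $V_i\notin k(X)$ (each $V_i$ is $Y$ times a rational function of $X$), which forces $[E_i:k(U_i)]=2$ and hence $[K:E_i]=[K:k(U_i)]/2=3$. With those points added, the remaining work is the finite coefficient comparison you describe, including the two degenerations $b=0$ and $b^3-4ab+9=0$ of the numerator $(X-s)^2(X-t)$, which you have identified correctly as the loci where $s$, respectively $t$, escapes to infinity.
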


\subsection{Elliptic subcovers}

We express the j-invariants $j_i$ of the elliptic subfields $E_i$ of $K$, from Lemma~\ref{lemma3_2}, in terms
of $u$ and $v$ as follows:

\begin{small}
\begin{equation}\label{j_1}
\begin{split}
j_1 &= 16v \frac {(vu^2+216u^2-126vu-972u+12v^2+405v)^3}
{ (v-27)^3(4v^2+27v+4u^3-18vu-vu^2)^2}\\
j_2 &= -256 \frac {(u^2-3v)^3}{v(4v^2+27v+4u^3-18vu-vu^2)} \\
\end{split}
\end{equation}
\end{small}
where $v\neq 0, 27$.
\begin{remark}
The automorphism $\nu \in Gal_{k(u,v)/k(\r_1, \r_2)}$ permutes the elliptic subfields. One can easily check
that:
$$\nu(j_1)=j_2, \quad \nu(j_2)=j_1$$
\end{remark}

\begin{lem}
The j-invariants of the elliptic subfields satisfy the following quadratic equations over $k(r_1, r_2)$;
\begin{equation}\label{eq_j_new}
j^2- T \, j+ N =0,
\end{equation}
where $T, N$ are given in \cite{deg3}.
\end{lem}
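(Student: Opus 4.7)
The plan is to verify the lemma by Vieta's formulas: since $j_1$ and $j_2$ are to be the two roots of the quadratic Eq.~\eqref{eq_j_new}, it suffices to set
\[
T := j_1+j_2, \qquad N := j_1\cdot j_2,
\]
show that both quantities lie in the subfield $k(r_1,r_2)\subset k(u,v)$, and then identify them with the explicit expressions recorded in \cite{deg3}. Here $r_1,r_2$ are the invariants of the involution $\nu$ which, by the remark preceding the lemma, swaps $j_1\leftrightarrow j_2$; hence $T$ and $N$ are automatically $\nu$-invariant as elements of $k(u,v)$, and therefore lie in the fixed field $k(r_1,r_2)$ by Galois theory.

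First I would substitute the formulas \eqref{j_1} into $T=j_1+j_2$ and $N=j_1j_2$ and bring them to a common denominator. The common denominator $(v-27)^3\,v\,(4v^2+27v+4u^3-18vu-vu^2)^2$ of $j_1+j_2$ is already $\nu$-invariant, so the computation reduces to checking that the numerator is $\nu$-invariant, which is an algebraic identity in $k[u,v]$. The product $j_1j_2$ is simpler: after cancellation its denominator is $(v-27)^3(4v^2+27v+4u^3-18vu-vu^2)$, again a polynomial one can check term by term to be expressible via $r_1,r_2$. Once invariance is confirmed, I would rewrite the numerators of $T$ and $N$ as polynomials in $r_1,r_2$ by using the standard expression of symmetric functions of $u,v$ (under $\nu$) in terms of the generators $r_1,r_2$ of the fixed field, matching them to the formulas listed in \cite{deg3}.

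Two alternative checks would give additional confidence and bypass heavy bookkeeping. (a) \emph{Resultant check:} eliminating one of $u,v$ between \eqref{j_1} for $j_1$ and \eqref{j_1} for $j_2$ should produce a polynomial of $j$-degree two, and the coefficients of $j^1$ and $j^0$ are then $-T$ and $N$ by definition. (b) \emph{Specialization check:} at generic values of $r_1,r_2$ (equivalently at generic $(u,v)$), evaluate $j_1,j_2$ numerically, form the quadratic $(j-j_1)(j-j_2)$, and compare with the candidate $T,N$ in \cite{deg3}. Both of these converge on the same conclusion.

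The main obstacle is not conceptual but computational: the numerator of $j_1+j_2$ is a large polynomial, and organizing its reduction modulo the ideal generated by the relations defining $r_1,r_2$ requires care. In particular one must make sure the denominators $v$, $(v-27)$, and $4v^2+27v+4u^3-18vu-vu^2$ (which vanish precisely in the degenerate situations excluded in Lemma~\ref{lemma3_2}) are nonzero in the working ring, and that any factor one cancels from numerator and denominator is itself $\nu$-invariant so that the quotient remains in $k(r_1,r_2)$. Once this is handled---most efficiently with a computer algebra system---the lemma follows immediately, with the explicit expressions for $T,N$ being exactly those tabulated in \cite{deg3}.
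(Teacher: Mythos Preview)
Your proposal is correct and follows exactly the approach implicit in the paper (and carried out in \cite{deg3}): by the remark preceding the lemma, $\nu$ swaps $j_1$ and $j_2$, so the symmetric functions $T=j_1+j_2$ and $N=j_1j_2$ are $\nu$-invariant elements of $k(u,v)$ and hence lie in the fixed field $k(r_1,r_2)$; the explicit formulas for $T,N$ are then obtained by direct substitution of \eqref{j_1} and simplification. The paper itself gives no proof here and simply refers to \cite{deg3} for the resulting expressions, so your Vieta-plus-Galois argument is precisely what is intended.
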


\subsubsection{Isomorphic Elliptic Subfields}
Suppose that $E_1\iso E_2$. Then, $j_1=j_2$ implies that
\begin{small}
\begin{equation}
\begin{split}
8v^3+27v^2-54uv^2-u^2v^2+108u^2v+4u^3v-108u^3=0\\
\end{split}
\end{equation}
\end{small}
or
\begin{scriptsize}
\begin{equation}
\begin{split}\label{n_3_iso2}
& 324v^4u^2-5832v^4u+37908v^4-314928v^3u-81v^3u^4+255879v^3+30618v^3u^2\\
& -864v^3u^3-6377292uv^2 +8503056v^2-324u^5v^2+2125764u^2v^2-215784u^3v^2\\
& +14580u^4v^2+16u^6v^2+78732u^3v+8748u^5v -864u^6v-157464u^4v+11664u^6 =0\\
\end{split}
\end{equation}
\end{scriptsize}

The former equation is the condition that $det ( Jac(\th))=0$. The
expressions of $i_1, i_2, i_3$  we can express $u$ as a rational function in $i_1, i_2$, and $v$. This is
displayed in \cite[Appendix B]{deg3}. Also, $[k(v):k(i_1)]=8$ and $[k(v):k(i_2)]=12$. Eliminating $v$ we get a
curve in $i_1$ and $i_2$ which has degree 8 and 12 respectively. Thus, $k(u,v)=k(i_1,i_2)$. Hence, $e_3(K) = 1$
for any $K$ such that the associated $u$ and $v$ satisfy the equation; see \cite{deg3} for details.

\subsubsection{The Degenerate Case}
We assume now that one of the extensions $K/E_i$ from Lemma~\ref{lemma3_2} is degenerate, i.e. has only one
branch point. The following lemma determines a relation between $j_1$ and $j_2$.

\begin{lem} Suppose that $K/E_2$ has only one branch point. Then,
$$729 j_1 j_2 -(j_2-432)^3=0$$
\end{lem}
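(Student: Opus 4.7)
The hypothesis is that the induced cover $\psi_2\colon C\to E_2$ of degree $3$ has exactly one branch point. Riemann--Hurwitz forces the ramification divisor to have degree $2$, so this corresponds to either Case II (two ramification points of index $2$ lying in the same fibre) or Case III (a single ramification point of index $3$) from the classification at the start of Section 6. My plan is to translate this geometric condition into an explicit polynomial condition on the coordinates $u=ab$, $v=b^3$ of Lemma \ref{lemma3_1}, and then verify the claimed identity by direct substitution into the formulas \eqref{j_1}.

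First I would work with the rational map $U_2(X)=(X-s)^2(X-t)/G(X)$ from the generic branch of Lemma \ref{lemma3_2}, treating the subcases $b=0$ and $b^3-4ab+9=0$ as separate checks. The induced degree-$3$ map $\phi_2\colon\bP^1\to\bP^1$ has, generically, two simple critical values, obtained by evaluating $U_2$ at the finite zeros of $U_2'(X)$. Combined with the ramification contributed by the hyperelliptic double cover $\pi_C$, the assertion that $\psi_2$ has only one branch point translates into the vanishing of a resultant/discriminant-type polynomial $D(u,v)$: concretely, either the two critical values of $U_2$ coincide (Case II) or $U_2'$ acquires a double zero (Case III), and I would show that both alternatives collapse to the same irreducible algebraic condition in $u$ and $v$.

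With $D(u,v)$ in hand, the second step is pure computation: substitute the formulas \eqref{j_1} for $j_1(u,v)$ and $j_2(u,v)$ into
\[ \Phi(u,v)\;:=\;729\,j_1 j_2\;-\;(j_2-432)^3 \]
and show that the numerator of the resulting rational function, as a polynomial in $u$ and $v$, is divisible by $D(u,v)$ (the denominator being the evident discriminant factor $(v-27)^3(4v^2+27v+4u^3-18vu-vu^2)^2$ already present in \eqref{j_1}). Equivalently, and perhaps more economically, I would parametrize the locus $D=0$ rationally by a single parameter $t$, write $j_1(t)$ and $j_2(t)$ as rational functions of $t$, and verify $729\,j_1(t)j_2(t)=(j_2(t)-432)^3$ as a polynomial identity in $t$.

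The main obstacle is step one: the degeneracy condition is not immediate from the parametrization in Lemma \ref{lemma3_2}, and one must take care to discard spurious components arising either from $\psi_2$ becoming non-maximal or from the elliptic curve $E_2$ itself degenerating (i.e.\ from the vanishing of the resultant $R=4a^3+27-18ab-a^2b^2+4b^3$, which is excluded by \eqref{n_3_form2}). Once the correct $D(u,v)$ is isolated, the algebra in step two is bulky but mechanical, and is naturally delegated to a computer algebra system, in keeping with the Maple-based computational style of the paper described in Section 11.
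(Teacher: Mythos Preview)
The paper does not give a self-contained proof of this lemma; it simply writes ``For details of the proof see Shaska \cite{deg3}.'' Your computational strategy---extracting the degeneracy locus $D(u,v)=0$ from the explicit description of $U_2$ in Lemma~\ref{lemma3_2}, and then checking that the numerator of $729\,j_1j_2-(j_2-432)^3$ vanishes on that locus using the formulas~\eqref{j_1}---is exactly the style of argument carried out in \cite{deg3}, which is computational throughout. So your plan is sound and in line with the intended proof.

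Two small comments. First, your caveat about spurious components is well placed: in addition to the resultant $R$, you should also watch the factor $v-27$ (equivalently $b^3=27$), since it appears in the denominator of $j_1$ in~\eqref{j_1} and is precisely where the normal form of Lemma~\ref{lemma3_1} breaks down. Second, the sentence immediately following the lemma in the paper (the substitution $T=-27j_1$ recovering the level-$2$ Fricke polynomial $F_2(T)=(T+16)^3/T$) is a hint that the identity is really the classical $2$-isogeny modular relation in disguise. Recognising this can guide the algebra: once you have parametrized the degenerate locus by a single parameter, you should expect $j_1$ and $j_2$ to be expressible so that one of them is $(T+16)^3/T$, which makes the final verification immediate.
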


For details of the proof see Shaska \cite{deg3}.
Making the substitution $T=-27j_1$ we get $$j_1=F_2(T)= \frac {(T+16)^3} T$$ where $F_2(T)$ is the Fricke
polynomial of level 2.

If both $K/E_1$ and $K/E_2$ are degenerate then
\begin{equation}
\begin{split}
\left\{ \aligned
729j_1j_2-(j_1-432)^3=0\\
729j_1j_2-(j_2-432)^3=0\\
\endaligned
\right.
\end{split}
\end{equation}
There are 7 solutions to the above system. Three of which give isomorphic elliptic curves
$$j_1=j_2=1728, \quad j_1=j_2=\frac 1 2 (297 \pm 81 \sqrt{-15})$$
The other 4 solutions are given by:
\begin{equation}
\begin{split}
\left\{ \aligned
729j_1j_2-(j_1-432)^3=0\\
j_1^2+j_2^2-1296(j_1+j_2)+j_1 j_2 +559872=0\\
\endaligned
\right.
\end{split}
\end{equation}

\subsection{Further remarks}

If $e_3(C)\geq 1$ then the automorphism group of $C$ is one of the following: $\bZ_2, V_4$, $ D_4$, or $D_6$.
Moreover; there are exactly 6 curves $C\in \L_3$ with automorphism group $D_4$ and six curves $C\in \L_3$ with
automorphism group $D_6$. They are listed in \cite{Sh6} where rational points of such curves are found.

Genus 2 curves with  degree 5 elliptic subcovers are studied in \cite{deg5} where a description of the space $\L_5$
is given and all its degenerate loci. The case of degree 7 is the first case when all possible degenerate loci
occur.

We have organized the results of this paper in a Maple package which determines if a genus 2 curve has degree
$n=2,3$ elliptic subcovers. Further, all its elliptic subcovers are determined explicitly. We intend to implement
the results for $n=5$ and the degenerate cases for $n=7$.

\section{Field of moduli versus the field of definition}

\def\Hm{\mathcal H}

Let $\X$ be a curve defined over $k$. A field $F \subset k$ is called a {\it field of definition} of $\X$ if there
exists $\X'$ defined over $F$ such that $\X \iso \X'$. The {\bf field of moduli} of $\X$ is a subfield $F \subset
k$ such that for every automorphism $\sigma$ of $k$, $\X$ is isomorphic to $\X^\sigma$ if and only if $ \sigma_F =
id$.

The field of moduli is not necessary a field of definition. To determine the points $\p \in \M_g$ where the field
of moduli is not a field of definition is a classical problem in algebraic geometry and has been the focus of many
authors, Weil, Shimura, Belyi, Coombes-Harbater, Fried, D\'ebes, Wolfart among others.

Weil (1954) showed that for every algebraic curve with trivial automorphism group, the field of moduli is a field
of definition. Shimura (1972) gave the first example of a family of curves such that the field of moduli is not a
field of definition. Shimura's family were a family of hyperelliptic curves.  Further he adds:\\

 {\it `` ... the
above results combined together seem to indicate a rather complicated nature of the problem, which almost defies
conjecture. A new viewpoint is certainly necessary to understand the whole situation''}\\

We call a point $\p \in \Hm_g$ a {\it moduli point}. The field of moduli of $\p$ is denoted by $F_\p$. If there is
a curve $\X_g$ defined over $F_\p$ such that $\p=[\X_g]$, then we call such a curve a {\it rational model over the
field of moduli}. Consider the following problem:

\smallskip

{\it Let the moduli point $\p \in \Hm_g$ be given. Find necessary and sufficient conditions that the field of
moduli $F_\p$ is a field of definition. If $\p$ has a rational model $\X_g$ over its field of moduli, then
determine explicitly the equation of $\X_g$.}

\smallskip

In 1993, Mestre solved the above problem for genus two curves with automorphism group $\Z_2$. In Corr.~\ref{cor1} is proved  that for points $\p \in \M_2$ such that $| Aut (\p)| > 2$ the field
of moduli is a field of definition

The proof of the above facts is constructive. In other words, a rational model is given. In the case when the field of moduli is not a field of definition a rational model is given over the minimal field of definition.

More generally one can consider the following problem for genus $g > 2$ hyperelliptic curves.

\section{Factoring large numbers with genus 2 curves}

In \cite{Co} an algorithm is suggested for factoring large numbers using genus two curves. Such algorithm chooses genus two curves with (2, 2)-split Jacobians. 

\subsection{Algorithm}

HECM begin by computing $k=\prod_{ \pi\leq B_{1} } \pi^{ \left[  \log\left(B_{1}\right) / \log\left(\pi\right)  \right] }.$
We hope to encounter the zero of one of the underlying elliptic curves so it is important to have explicit morphisms between the Kummer surface and the two underlying elliptic curves. If the elliptic curves are in the Weierstrass form, we only need the coordinates $\left(x::z\right)$ to test if the point is zero: just compute $\gcd\left(z,n\right)$. The morphisms between the Kummer surface and the underlying elliptic curves are rational over $\Q$.

\begin{algorithm}[t]
  \caption{HECM (stage 1)}
  \label{HECM}
\begin{algorithmic}[1]

	\REQUIRE the number $n$ to be factor. The smoothness bound $B_{1}$.
	\ENSURE a factor $p$ of $n$.
	
	\STATE Compute $ k=lcm\left(1,2,\ldots,B_{1}\right) $.
	\STATE Choose a random decomposable curve $\mC$ over $\Z/n\Z$ and a point $P$ on its Kummer surface.
	\STATE Compute $Q=[k]P$.
	\STATE Map $Q$ to the two underlying elliptic curves $\mE_{i}$.
	\STATE Hope that $Q=\mathcal{O}_{\left(\mE_{i}\right)} \mod p$ for one $\mE_{i}$ (test whether $\gcd\left(z,n\right)\neq 1$).
	\STATE Else go to $2$.

\end{algorithmic}
\end{algorithm}

The global morphism in HECM goes from the Kummer surface to $\mE_{1}\times\mE_{2}$.

Let $ Q=(x,y,z,t)$ be a point on the Kummer surface $\K_{a,b,c,d}$ corresponding to the hyperellipitc curve $\mC$ of equation $y^{2}=f(x) $. We want to map this point in the Jacobian of the hyperelliptic curve. As in the above section we find the Mumford coordinates $(u,v)$ of two opposite divisors $\Psi^{-1}(Q)={\pm D}$.

Now, let $P$ be a point on the $(2,2)$-decomposable hyperelliptic curve $\mC$ given by:\[\mC: \chi y^{2}=x(x-1)(x-\lambda)(x-\mu)(x-\nu).
\]
We want to map $P$ to the two elliptic curves $\mE_{1},\mE_{2}$.  Let $\mC^{\prime}$ be the curve given by  \[\mC^{\prime}: \kappa y^2=
\left(x^{2}-1\right)\left(x^{2}-x_2^2\right)\left(x^{2}-x_3^2\right)  \]
with \[ q= \pm \sqrt{\mu\left(\mu-\nu\right)},\quad x_2= \frac{\mu+ q}{\mu- q}, \ x_{3}=\frac{1-\mu - q}{1-\mu + q}, \ \chi= -q\kappa \mu
\left(\mu-1\right).\] The curves $\mC$ and $\mC^{\prime}$ are isomorphism by the change of the coordinates. Than the curve $\mC^{\prime}$
maps to the elliptic curve
\[\mE: y^2=\left(x-1\right)\left(x-x_2^2\right)\left(x-x_3^2\right)  \]
by the morphism $(x,y)\rightarrow (x^{2},y)$. Let $f$ be the map from $\mC$ to $\mE_{1}\times \mE_{2}$. The push forward $f_{*}$ of $f$ is defined:

\begin{equation*}
f_{*}:\left\{
\begin{array}{rl}
\J\longrightarrow Jac(\mE_{1})\times Jac(\mE_{2})\\
D=\sum_{i=1}^{r}P_{i}-rP_{\infty}\longrightarrow \sum_{i=1}^{r}f(P_{i})-rf(P_{\infty})
\end{array}\right.
\end{equation*}
where $f(P\infty)=(\O_{\mE_{1}},\O_{\mE_{2}})$ are the zeros of the two elliptic curves.  Since the divisors in the Jacobian of the elliptic curve are not reduced, and it is isomorphic to the set of points on the curve, then the function $f_{*}$ can be rewritten as:

\begin{equation*}
f_{*}:\left\{
\begin{array}{rl}
\J\longrightarrow \mE_{1}\times\mE_{2}\\
D=\sum_{i=1}^{r}P_{i}-rP_{\infty}\longrightarrow \sum_{i=1}^{r}f(P_{i})
\end{array}\right.
\end{equation*}

We now turn to what is called stage $2$. The initial point for the arithmetic is $Q=[k]P$, thus we don't have the benefit of a  "good" initial point. Moreover, stage $2$ needs the $x$-coordinate of many points on the elliptic curve in Weierstrass form \cite{ZiDo06},
therefore if we use hyperelliptic curves we need to apply morphisms a lot and, in this case, their cost would not be negligible.  For all these reasons, it seems that hyperelliptic curves should not be used for stage~$2$. Instead, we can apply the ECM stage $2$ to the two
underlying elliptic curves.\\

For stage 2, ECM method needs a point on the curve $y^{2}=x^{3}+Ax+B$, but we have a point $(x_{1}:z_{1})$ on
$\kappa y^{2}z=x^{3}+a_{2}x^{2}z+a_{4}xz^{2}+a_{6}z^{3}$.  First translate the point by putting $x\rightarrow x-a_{2}z/3$ and dividing the
x-coordinate by $z$ to get a point $(x_{1}^{\prime}, \Box)$ on the curve $\kappa y^{2}=f(x)=x^{3}+a_{4}^{'}+a_{6}^{'}$. Then the points $
(x_{1}^{\prime},\pm 1)$ are on the curve
\[T y^{2}=f(x)=x^{3}+a_{4}^{\prime}x+a_{6}^{'}\] with $D=f(x_{1}^{\prime}.$
By the change of variable
\[(x,y)\mapsto (\frac{x}{T},\frac{y}{T}) \]
we get a point on the curve $y^{2}=x^{3}+Ax+B$ with $A=a_{4}^{\prime}/T$ and $B=a_{6}^{\prime}/T^{3}.$

\section{A computational package for genus two curves}\label{genus2}

Genus 2 curves are the most used of all hyperelliptic curves due to their application in cryptography and
also best understood. The moduli space $\M_2$ of genus 2 curves is a 3-dimensional variety. To understand
how to describe the moduli points of this space we need to define the invariants of binary sextics. For
details on such invariants and on the genus 2 curves in general the reader can check \cite{Ig}, \cite{Sh3},
\cite{KSV}.

\begin{equation}
i_1:=144 \frac {J_4} {J_2^2}, \quad i_2:=- 1728 \frac {J_2J_4-3J_6} {J_2^3}, \quad i_3 :=486 \frac {J_{10}}
{J_2^5},
\end{equation}
for $J_2 \neq 0$. In the case $J_2=0$ we define
\begin{equation}\label{eq_a_1_a_2}
\a_1:= \frac {J_4 \cdot J_6} {J_{10}}, \quad \a_2:=\frac {J_6 \cdot J_{10}} {J_4^4}
\end{equation}
to determine genus two fields with $J_2=0$, $J_4\neq 0$, and $J_{6}\neq 0$ up to isomorphism.

For a given genus 2 curve $C$ the corresponding \textbf{moduli point} $\p = [C]$ is defined as
\[
\begin{split}
\p = \left\{
\aligned
& (i_1, i_2, i_3) \; \; if \; J_2 \neq 0\\
& (\a_1, \a_2) \; \; if \; J_2=0, J_4\neq 0, J_{6}\neq 0 \\
& \frac{J_{6}^5}{J_{10}^3} \;\; if \; J_2=0, J_4=0, J_6\neq 0  \\
& \frac{J_{4}^5}{J_{10}^2} \;\; if \; J_2=0, J_6=0, J_4\neq 0\\
\endaligned
\right.
\end{split}
\]
Notice that the definition of $\a_1, \a_2$ can be totally avoided if one uses absolute invariants with
$J_{10}$ in the denominator. However, the degree of such invariants is higher and therefore they are not
effective computationally.

We have written a Maple package which finds most of the common properties and invariants of genus two curves. While this is still work in progress, we will describe briefly some of the functions of this package. The functions in this package are:\\

\noindent \verb"J_2, J_4, J_6, J_10, J_48, L_3_d,  a_1, a_2, i_1, i_2, i_3,"

\noindent  \verb"theta_1, theta_2, theta_3, theta_4, AutGroup, CurvDeg3EllSub_J2,"

\noindent   \verb"CurveDeg3EllSub, Ell_Sub, LocusCurves,Aut_D4, LocusCurvesAut_D4_J2,"

\noindent    \verb"LocusCurvesAut_D6, LocusCurvesAut_V4, Rational_Model, Kummer."

\medskip

Next, we will give some examples on how some of these functions work.

\subsection{Automorphism groups}\label{sec:autgroup}
A list of groups that can occur as automorphism groups of hyperelliptic curves is given in \cite{Sh5} among
many other references. The function in the package that computes the automorphism group is given by
$AutGroup ()$. The output is the automorphism group. Since there is always confusion on the terminology when
describing certain groups we also display the GAP identity of the group from the \verb"SmallGroupLibrary".

For a fixed group $G$ one can compute the locus of genus $g$ hyperelliptic curves with automorphism group
$G$. For genus 2 this loci is well described as subvarieties of $\mathcal M_2$.

\begin{example} Let $y^2=f(x)$ be a genus 2 curve where $f:=x^5+2x^3-x$. Then the function
 \verb"AutGroup"(f,x) displays:\\

\noindent $>$ \verb"AutGroup"(f,x);
\begin{center} $[D_4,(8,3)]$ \end{center}

\end{example}

\begin{example} Let $y^2=f(x)$ be a genus 2 curve where  $f:=x^6+2x^3-x$. Then the function
\verb"AutGroup"(f,x) displays:\\

\noindent $>$ \verb"AutGroup"(f,x);
\begin{center}    $[V_4,(4,2)]$ \end{center}
\end{example}

\noindent We also have implemented the functions: \verb"LocusCurvesAut_V_4()",

\verb"LocusCurvesAut_D_4()", \verb"LocusCurvesAut_D4_J2()", \verb"LocusCurvesAut_D_6()",

\noindent which gives equations for the locus of curves with automorphism group $D_4$ or $D_6$.

\subsection{Genus 2 curves with split Jacobians}
A genus 2 curve which has a degree $n$ maximal map to an elliptic curve is said to have $(n, n)$-\emph{split
Jacobian}; see \cite{Sh6} for details. Genus 2 curves with split Jacobian are interesting in number theory,
cryptography, and coding theory. We implement an algorithm which checks if a curve has $(3, 3)$, and
$(5,5)$-split Jacobian. The case of $(2, 2)$-split Jacobian corresponds to genus 2 curves with extra
involutions and therefore can be determined by the function \verb"LocusCurvesAut_V_4"().

The function which determines if a genus 2 curve has $(3, 3)$-split Jacobian is \verb"CurvDeg3EllSub"() if
the curve has $J_2 \neq 0$ and \verb"CurvDeg3EllSub_J_2 ()" otherwise; see \cite{Be5}. The input of
\verb"CurvDeg3EllSub"() is the triple $(i_1, i_2, i_3)$ or the pair $(\a_1, \a_2)$ for
\verb"CurvDeg3EllSub_J_2 ()". If the output is 0, in both cases, this means that the corresponding curve to
this moduli point has $(3, 3)$-split Jacobian. Below we illustrate with examples in each case.

\begin{example} Let $y^2=f(x)$ be a genus 2 curve where $f:=4 x^6+9 x^5+8 x^4+10 x^3+5 x^2+3 x+1$. Then,\\

 \noindent \verb"> i_1:=i_1(f,x); i_2:=i_2(f,x); i_3:=i_3(f,x);"
 \[i_1:= \frac{78741}{100}, \quad  i_2:= \frac{53510733}{2000}, \quad i_3:= \frac{38435553}{51200000}\]
\noindent $>$ \verb"CurvDeg3EllSub"$(i_1,i_2,i_3)$;
 \begin{center} 0 \end{center}
This means that the above curve has a $(3,3)$-split Jacobian.
\end{example}

\begin{example} Let
$y^2=f(x)$ be a genus 2 curve where
$f:=4x^6+(52\sqrt{6}-119)x^5+(39\sqrt{6}-24)x^4+(26\sqrt{6}-54)x^3+(13\sqrt{6}-27)x^2+3x+1$.
Then,\\

\noindent \verb"> a_1:=a_1(f,x); a_2:=a_2(f,x);"

\[
\begin{split}
a_1 &:= \frac{1316599234443}{270840023}\sqrt{6}+\frac{6310855638567}{541680046}, \\
a_2 &:= \frac{-96672521239976}{1183208072032328121}\sqrt{6}+\frac{1467373119039023}{7099248432193968726}
\end{split}
\]
$>$ \verb"CurvDeg3EllSub_J_2"$(a_1,a_2)$
\begin{center} 0 \end{center}
This means that the curve has $J_2=0$ and $(3,3)$-split Jacobian.
\end{example}

\subsection{Rational model of genus 2 curve}
For details on the rational model over its field of moduli see   \cite{Sh7}. The rational
model of $C$ (if such model exists) is determined by the function \verb"Rational_Model()".

\begin{example} Let $y^2=f(x)$ be a genus 2 curve where
$f:=x^5+\sqrt{2}x^3+x$. Then,\\

\noindent $>$ \verb"Rational_Model(f,x)";\\
\[x^5+x^3+\frac{1}{2}x\]
\end{example}

\begin{example} Let $y^2=f(x)$ be a genus 2 curve where
$f:=5x^6+x^4+\sqrt{2}x+1$. Then,\\

\noindent $>$ \verb"Rational_Model(f,x)";\\

\begin{tiny}
\begin{equation*}
\begin{split}
 & -365544026018739971082698131028050365165449396926201478x^6\\
 &-606501618836700589954579317910699990585971018672445125x^5\\
 &-369842283192872727990502041940062429271727924754392250x^4\\
 &-32387676975314893414920003149434215247663074288356250x^3\\
 &+74168490079198328987047652288420271784298171220937500x^2\\
 &+38274648493772601723357350829541971828965732551171875x\\
 &+6501732463119213927460859571034949543087123367187500
 \end{split}
\end{equation*}
\end{tiny}
\end{example}

Notice that our algorithm doesn't always find the minimal rational model of the curve. An efficient way to
do this has yet to be determined.

\subsection{A different set of invariants}

As explained in Section 2, invariants $i_1, i_2, i_3$ were defined that way for computational benefits. However, they make the results involve many subcases and are inconvinient at times. In the second version the the \texttt{genus2} package we intend to convert all the results to the $t_1, t_2, t_3$ invariants
\[ t_1 = \frac {J_2^5} {J_{10}}, \quad  t_2 = \frac {J_4^5} {J_{10}^2}, \quad t_3  = \frac {J_6^5} {J_{10}^3}.\]

The other improvement of version two is that when the moduli point $\p$ is given the equation of the curve is given as the minimal equation over the minimal field of definition.

\end{document}